\newtheorem{theorem}{Theorem}[section]
\newtheorem{lemma}[theorem]{Lemma}
\newtheorem{proposition}[theorem]{Proposition}
\newtheorem{corollary}[theorem]{Corollary}
\newtheorem*{remark}{Remark}
\newtheorem{question}{Question}
\newtheorem{definition}[theorem]{Definition}
\newtheorem*{maintheorem}{Theorem \ref{maintheorem}}
\newcommand{\cof}[1]{\mathrm{cof}(#1)}
\newcommand{\cin}[1]{\mathrm{cin}(#1)}
\newcommand{\cov}[1]{\mathrm{cov}(#1)}
\newcommand{\baire}{\omega^\omega}
\newcommand{\bairetree}{\omega^{<\omega}}
\newcommand{\Baire}{[\omega]^\omega}
\newcommand{\cone}[1]{\langle #1 \rangle}
\newcommand{\ultpow}[1]{#1 / \mathscr{U}}
\newcommand{\vltpow}[1]{#1 / \mathscr{V}}
\newcommand{\ult}[1]{\mathscr{#1}}
\title{Michael spaces and Ultrafilters}
\author{Arturo Mart\'{i}nez-Celis}
\address{Instytut Matematyczny, Uniwersytet Wrocławski, pl. Grunwaldzki 2, 50-384 Wrocław, Poland}
\email{arturo.martinez-celis@math.uni.wroc.pl}
\subjclass[2020]{54D20,03E17,03E35,03E75}
\keywords{Lindel\"{o}f, Michael space, ultrafilter, cardinal invariants}
\begin{document}
\begin{abstract}
    A Michael space is a Lindel\"of space which has a non-Lindel\"of product with the Baire space. In this work, we present the notion of Michael ultrafilter and we use it to construct a Michael space under the existence of a selective ultrafilter and $\max \{ \mathfrak{b}, \mathfrak{g} \} =\mathfrak{d}$.
\end{abstract}

\maketitle

In 1963, E. Michael \cite{MichaelNormal} constructed, under CH ($\mathfrak{b}=\aleph_1$) a \emph{Michael space}, a Lindel\"of space whose product with the Baire space is not Lindel\"of. Later, in 1990, K. Alster \cite{AlsterMichaelspace} constructed one under MA. In 1999, J. Moore \cite{MooreCombinatorics} developed a general framework for constructing Michael spaces and was able to construct one under $\mathfrak{d}=\cov{\mathcal{M}}$. This framework can be stated using the notion of $\theta$-Michael sequences:

\begin{definition}{\cite{MooreCombinatorics}}
    A sequence $\{ X_\alpha : \alpha \leq \theta \}$ is a $\theta$\emph{-Michael sequence} if
    \begin{itemize}
        \item it is a strictly $\subseteq$-increasing sequence of subsets of $\baire$, with $X_\theta = \baire$,
        \item for every compact set $K \subseteq \baire$ and every $\alpha$ of uncountable cofinality, if $K \subseteq X_\alpha$ then there is a $\gamma < \alpha$ such that $K \subseteq X_\gamma$.
    \end{itemize}
\end{definition}

For our convenience, we slightly modified the original notion of Michael sequences (our notion can be obtained by taking the sequences of complements of the original notion). In \cite{MooreCombinatorics}, it is proven that the existence of a $\theta$-Michael sequence with $\theta$ of uncountable cofinality implies the existence of a Michael space. So we will focus on construct a Michael sequence instead of directly constructing a Michael space.

In this work we will use frequently the order structure given by an ultrafilter: Given a non-principal ultrafilter $\ult{U}$ over $\omega$ and $f,g \in \baire$, $f$ is $\leq_\ult{U}$\emph{-dominated by} $g$, denoted by $f \leq_\ult{U} g$, if $\{ n \in \omega : f(n) \leq g(n) \} \in \ult{U}.$ The \emph{character of the ultrafilter}, denoted by $\chi(\ult{U})$, is the smallest size of a basis for $\ult{U}$, that is the smallest $B \subseteq \ult{U}$ such that for every $U \in \ult{U}$ there is $V \in B$ such that $V \subseteq U$. The cardinal $\mathfrak{u}$, \emph{the ultrafilter number}, is the smallest $\chi(\ult{U})$. If $K \subseteq \baire$, we will denote by $\cof{\ultpow{K}}$ as the smallest $|D|$ such that $D \subseteq K$ and for every $f \in K$, there is $g \in D$ such that $f \leq_\ult{U} g$. The sets $D$ that satisfy the last property will be called an $\leq_\ult{U}$\emph{-cofinal set} or a $\leq_\ult{U}$\emph{-dominating set in} $K$. The cardinal $\cof{\ultpow{\baire}}$ is known in the literature as \emph{the cofinality of the ultrapower}, and has been studied in \cite{Canjar2}, \cite{Canjar1}, \cite{Nyikosultrafilter} and \cite{BlassHeike}. These can be easily shown to be regular. We will also use \emph{the coinitiality of the ultrapower}, denoted by $\cin{\ultpow{\baire}}$, which is the smallest collection $D$ of finite to one non-decreasing functions of $\baire$ such that for any non decreasing finite to one $f \in \baire$ there is $g \in D$ such that $g \leq_\ult{U} f$. Such cardinal was considered by Canjar in \cite{Canjar2}. An ultrafilter $\ult{U}$ over $\omega$ is a \emph{p-point} if every decreasing sequence $\{ U_i : i \in \omega \} \subseteq \ult{U}$ has a \emph{pseudointersection} in $\ult{U}$, that is, a set $U \in \ult{U}$ such that $U \subseteq^*U_n$ for all $n \in \omega$. An ultrafilter $\ult{U}$ is a \emph{q-point} if for every interval partition $\langle I_n : n \in \omega \rangle$ of $\omega$ there is $U \in \ult{U}$ such that $|U \cap I_n| \leq 1$. A \emph{selective ultrafilter} is an ultrafilter that is a p-point and a q-point at the same time.

We will also use the theory of cardinal invariants. In particular, we will be using the following well-known cardinal invariants of the continuum:
\begin{itemize}
    \item The \emph{unbounding number}, denoted by $\mathfrak{b}$, is the smallest cardinality of a set $B \subseteq \baire$ such that for every $g \in \baire$ there is $f \in B$ such that $f \nleq^* g$,
    \item the \emph{dominating number}, denoted by $\mathfrak{d}$, is the smallest cardinality of a set $D \subseteq \baire$ such that for every $g \in \baire$ there is $f \in D$ such that $g \leq^* f$,
    \item the \emph{covering of the meager ideal}, denoted by $\cov{\mathcal{M}}$ is the smallest amount of meager sets required to cover the real line.
    \item the \emph{groupwise dense number}, denoted by $\mathfrak{g}$, is the smallest cardinality of a collection of group-wise dense sets with empty intersection; where a set $G \subseteq [\omega]^\omega$ is \emph{group-wise dense} if $G$ is closed under subsets and finite modifications, and for every partition $\langle I_n : n\in \omega \rangle$ of $\omega$ there is an infinite $A \subseteq \omega$ such that $\bigcup_{n\in A} I_n \in G$.    
\end{itemize}

One can show that $\max \{ \mathfrak{b},\mathfrak{g},\cov{\mathcal{M}} \} \leq \mathfrak{d}$ and that $ \max \{ \cov{\mathcal{M}},\mathfrak{b}\} \leq \mathfrak{u}$ (see for example \cite{BlassSurvey}). It is also true that $\mathfrak{g}$ is smaller or equal than the successor of $\mathfrak{b}$ (see \cite{Shelahgandb}). Any other inequality among any pair of these cardinal invariants is consistent (see \cite{Barty} and \cite{BlassSurvey}). It is also true that $\max \{ \mathfrak{b} , \mathfrak{g}\} \leq \cof{\ultpow{\baire}} \leq \mathfrak{d}$ (see \cite{BlassHeike}). An analogous result about the coinitiality is also true (Theorem \ref{blassheikeforcoinitiality}).

The main theorem of this work is the following.
\begin{maintheorem}
    A selective ultrafilter $\ult{U}$ is Michael if and only if $\cof{\ultpow{\baire}} \leq \cin{\ultpow{\baire}}$.
\end{maintheorem}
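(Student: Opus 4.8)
We write $\theta=\cof{\ultpow{\baire}}$ and $\kappa=\cin{\ultpow{\baire}}$ and prove the two implications separately. Throughout we use that $\leq_{\ult U}$ linearly preorders $\baire$, that $\theta$ is regular, that $K\subseteq\baire$ is compact iff it is closed and $\leq$-bounded by some $h\in\baire$ (in which case $h_K(n):=\max\{x(n):x\in K\}$ is a $\leq_{\ult U}$-upper bound of $K$ and $K\subseteq\prod_n[0,h_K(n)]$), and, crucially, the normal form coming from selectivity: by the Ramsey property of $\ult U$ every $[g]\in\ultpow{\baire}$ has a nondecreasing representative, so the nonstandard part of $\ultpow{\baire}$ may be identified with the $\leq_{\ult U}$-quotient of the finite-to-one nondecreasing functions; under this identification $\theta$ is its cofinality and $\kappa$ its coinitiality, and the reciprocal map $f\mapsto\widehat f$ (with $\widehat f(k)=$ the least $n$ with $f(n)>k$) reverses the everywhere-order and is an involution up to a finite shift — its interaction with $\leq_{\ult U}$ being the delicate point: $f\leq_{\ult U}g$ witnessed on $A\in\ult U$ yields $\widehat g\leq_{\ult U}\widehat f$ only when $\widehat f^{-1}(A)\in\ult U$. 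Being a Michael ultrafilter amounts to producing a $\theta$-Michael sequence of the form $X_\alpha=\{f\in\baire:f\leq_{\ult U}g_\alpha\}$ from a $\leq_{\ult U}$-increasing $\leq_{\ult U}$-cofinal chain $\langle g_\alpha:\alpha<\theta\rangle$ (with a dummy top $X_\theta=\baire$). For such $X_\alpha$, ``$K\subseteq X_\alpha$'' means ``$g_\alpha$ is a $\leq_{\ult U}$-upper bound of $K$'', and since every compact $K$ lies in the box $\prod_n[0,h_K(n)]$ whose $\leq_{\ult U}$-upper bounds are exactly the $f$ with $h_K\leq_{\ult U}f$, the Michael condition becomes (essentially) the requirement that the chain be $\leq_{\ult U}$-continuous at every stage of uncountable cofinality: each $g_\alpha$ with $\mathrm{cf}(\alpha)>\omega$ is a $\leq_{\ult U}$-exact upper bound of its predecessors and is not the $\leq_{\ult U}$-least upper bound of a box. (A little care is needed since general compact sets are not boxes, but $\mathrm{ub}(\ultpow K)\supseteq\mathrm{ub}(\ultpow{\prod[0,h_K]})$, so boxes are the extreme case.) The question is thus for which selective $\ult U$ such a continuous cofinal chain exists.

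For $(\Leftarrow)$, assume $\theta\leq\kappa$ and build $\langle g_\alpha:\alpha<\theta\rangle$ by recursion, interleaving a fixed $\leq_{\ult U}$-cofinal family to guarantee cofinality. Successor steps are trivial, and limits of cofinality $\omega$ use that $\ult U$ is a p-point and are in any case exempt from the Michael condition. The work is at a limit $\alpha$ with $\omega<\mathrm{cf}(\alpha)=\rho<\theta\leq\kappa$, where we need $g_\alpha$ to be a $\leq_{\ult U}$-exact upper bound of $\langle g_\gamma:\gamma<\alpha\rangle$ dodging the countably many relevant box–cuts. The key lemma is that a $\leq_{\ult U}$-increasing sequence of length $\rho<\kappa$ has a $\leq_{\ult U}$-exact upper bound: if not, the tail of its upper bounds has no least element, giving an unfilled pregap in $\ultpow{\baire}$; passing to nondecreasing representatives and applying $f\mapsto\widehat f$ turns this into a $\leq_{\ult U}$-coinitial family of finite-to-one nondecreasing functions of size $\leq\rho<\kappa$ (placing the reciprocal image of the gap at the bottom of the nonstandard part uses selectivity once more), contradicting the minimality of $\kappa$. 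Taking such an exact upper bound, nudging it off any box–cut, and continuing, one obtains a $\theta$-Michael sequence $\langle X_\alpha:\alpha\leq\theta\rangle$, so by \cite{MooreCombinatorics} $\ult U$ is Michael.

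For $(\Rightarrow)$, suppose $\ult U$ is Michael, witnessed by $\langle X_\alpha=\{f:f\leq_{\ult U}g_\alpha\}:\alpha\leq\theta\rangle$, and normalize the $g_\alpha$ to nondecreasing functions. Since $\bigcup_\alpha X_\alpha=\baire$, every finite-to-one nondecreasing $f$ is $\leq_{\ult U}g_\alpha$ for some $\alpha$; the Michael condition on boxes is exactly what lets one choose the witnessing $\ult U$-sets so that the reciprocal comparison goes through, giving $\widehat{g_\alpha}\leq_{\ult U}\widehat f$, so that $\{\widehat{g_\alpha}:\alpha<\theta\}$ is a $\leq_{\ult U}$-coinitial family of finite-to-one nondecreasing functions; hence $\kappa\leq\theta$. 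Equivalently, if $\kappa<\theta$ then a $\leq_{\ult U}$-coinitial family of fewer than $\theta$ finite-to-one nondecreasing functions yields, via reciprocals, a compact set whose set of $\leq_{\ult U}$-upper bounds is a gap of lower cofinality $<\theta$ that every length-$\theta$ cofinal chain is forced to cross at a stage of uncountable cofinality, so no chain of the required form can be $\theta$-Michael.

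The main obstacle is making the duality between compact sets (boxes) and finite-to-one nondecreasing functions respect $\leq_{\ult U}$: literally $f\leq_{\ult U}g$ does not imply $\widehat g\leq_{\ult U}\widehat f$, because the reciprocal reads off values outside the relevant $\ult U$-large set. The two uses of selectivity — first to normalize to nondecreasing representatives (equivalently, to interval partitions), then (together with the Michael/cofinality hypotheses) to choose the $\ult U$-witnesses so that reciprocals align — are precisely where being selective rather than merely a p-point is essential. Setting up this ``aligned reciprocal'' machinery, and with it the equivalence between exactness of upper bounds for $\rho$-chains and $\rho<\kappa$, is the heart of the proof; everything else is bookkeeping around Moore's framework.
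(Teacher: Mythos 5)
There is a genuine gap: your argument is organized around a reading of ``Michael ultrafilter'' that is not the one the theorem is about, and the reduction you make destroys the actual content of that notion. In this paper a Michael ultrafilter is defined by the condition that every compact $K\subseteq\baire$ with $\cof{\ultpow{K}}>\omega$ satisfies $\cof{\ultpow{K}}\geq\cof{\ultpow{\baire}}$; the existence of a suitable Michael sequence is a \emph{consequence} (Theorem \ref{existanceofmichaelspace}), not the definition. Your reduction of ``$K\subseteq X_\alpha$'' to ``$g_\alpha$ bounds the box $\prod_n[0,h_K(n)]$'' trivializes the condition: a box has a $\leq_{\ult{U}}$-maximum element ($h_K$ itself), so its internal cofinality is $1$ and boxes are never the obstruction. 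The entire difficulty lives in compact sets that are \emph{not} $\leq_{\ult{U}}$-cofinally equivalent to their bounding box, i.e.\ sets $K$ for which $\cof{\ultpow{K}}$ could sit strictly between $\omega$ and $\cof{\ultpow{\baire}}$; your ``boxes are the extreme case'' remark points in exactly the wrong direction. Consequently your forward implication, which tries to extract a coinitial family from a Michael sequence by ``aligning reciprocals,'' never identifies the one compact set that does the work, and your backward implication, which builds a continuous chain with exact upper bounds, would not rule out such intermediate-cofinality compact sets even if the exact-upper-bound lemma were proved (and its proof is only gestured at: an unfilled pregap of lower type $\rho$ does not obviously reciprocate into a family coinitial among \emph{all} finite-to-one nondecreasing functions).

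What the paper actually does, and what is missing from your proposal, is this. It introduces the concrete compact set $K_0=\{\varphi_A:A\subseteq\omega\}$, where $\varphi_A$ records the gaps of $A$, and proves two facts: (i) for a q-point, $\cof{\ultpow{K_0}}=\cin{\ultpow{\baire}}$ (Proposition \ref{boundsforqpoints}) --- this is the precise, localized form of the reciprocal duality $A\mapsto h_A$ you allude to; and (ii) for a selective ultrafilter, every compact set internally unbounded in $\ult{U}$ has cofinality at least $\cof{\ultpow{K_0}}$ (Proposition \ref{K0selective}), proved by a $\ult{U}$-branching-tree (Ramsey) argument --- this, together with the fact that every compact set of uncountable cofinality contains an internally unbounded compact subset of the same cofinality, is where selectivity is genuinely used. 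Given (i) and (ii) the theorem is immediate in both directions: since a selective ultrafilter is a p-point, $\cof{\ultpow{K_0}}>\omega$, so Michael-ness forces $\cin{\ultpow{\baire}}=\cof{\ultpow{K_0}}\geq\cof{\ultpow{\baire}}$; conversely, if $\cof{\ultpow{\baire}}\leq\cin{\ultpow{\baire}}$ then any compact $K$ with $\cof{\ultpow{K}}>\omega$ satisfies $\cof{\ultpow{K}}\geq\cof{\ultpow{K_0}}=\cin{\ultpow{\baire}}\geq\cof{\ultpow{\baire}}$. Your proposal contains no analogue of $K_0$, of computation (i), or of the tree argument (ii); the phrases ``the delicate point,'' ``a little care is needed,'' and ``is the heart of the proof'' mark exactly the places where the missing mathematics would have to go.
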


In particular, by Theorem \ref{existanceofmichaelspace}, there is a Michael space whenever there is a selective ultrafilter and either $\mathfrak{b} = \mathfrak{d}$ or $\mathfrak{g} = \mathfrak{d}$, for example, after forcing with $\mathcal{P}(\omega) / \mathrm{Fin}$.

The paper is organized in three sections: In the first section we will introduce the concept of Michael ultrafilters, we will look at some basic properties relating compact sets and ultrafilters and we will prove that $\cov{\mathcal{M}} = \mathfrak{c}$ imply the existence of this kind of ultrafilters. In the second section, we will focus mostly into the relation between q-points, selective ultrafilters and compact sets, to finally be able to prove Theorem \ref{maintheorem}. In the final section, we will look into the relationship of the Rudin-Keisler order, the Rudin-Blass order and Michael ultrafilters, to finally conclude with a model where no Michael ultrafilters can exist.

Our notation is standard and mostly follows \cite{Barty}: If $f,g \in \baire$, $A,B \subseteq \omega$ then $f \leq^* g$ means that $\{ n \in \omega : f(n) > g(n) \}$ is finite, $A \subseteq^* B$ means that $B \setminus A$ is finite, $f \leq_A g$ means that $A \subseteq \{ n \in \omega : f(n) \leq g(n) \}$ and $f | A$ is the usual restriction to $A$, that is, a function in $\baire$ such that $f | A (n) = f(n)$ if $n\in A$, otherwise the value is $0$. If $s \in \bairetree$ then $\langle s \rangle = \{ f \in \baire : s \subseteq f \}$, if $T \subseteq \bairetree$ is a tree, then $[T] = \{ f \in \baire : \forall n \in \omega (f | n \in T) \}$.  The filters considered in this manuscript are assumed to be non-principal. If $\mathscr{F}$ is a filter basis on $\omega$, then $\mathscr{F}^+ = \{ A\subseteq \omega : \forall F \in \mathscr{F} (A\cap F \text{ is infinite }) \}$. The sets $\mathcal{P}(\omega)$ and $\Baire$ are the collections of all sets of $\omega$, and the collection of all infinite subsets of $\omega$, respectively. In this manuscript, every enumeration for a subset of $\omega$ will be assumed to be increasing.

\section{Michael Ultrafilters}

Given any $\leq_\ult{U}$-dominant family, one can attempt to construct a Michael sequence in the same way as J. Moore did in \cite{MooreCombinatorics}, although there are some situations where this will fail, as we will see in the last section of this manuscript. One of the advantages of considering the order $\leq_\ult{U}$ is that it is linear, so unbounded sets and dominating sets coincide, and we will implicitly use this to isolate the properties that we require so that any $\leq_\ult{U}$-dominating family yields a Michael sequence. We begin this section by introducing the notion of ultrafilters that we will be working with throughout this paper.

\begin{definition}
An ultrafilter $\mathscr{U}$ is a Michael ultrafilter if for every compact $K \subseteq \baire$, if $\cof{\ultpow{K}} > \omega$, then $\cof{\ultpow{K}}\geq \cof{\ultpow{\baire}}$.
\end{definition}

 Michael ultrafilters are the ultrafilters that generate a Michael sequence, no matter which dominant family of minimal size we work with.

\begin{theorem}
If there is a Michael Ultrafilter, then there is a Michael space. \label{existanceofmichaelspace}
\end{theorem}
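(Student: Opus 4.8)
The plan is to produce, from a Michael ultrafilter $\ult{U}$, a $\theta$-Michael sequence with $\theta:=\cof{\ultpow{\baire}}$, and then to quote Moore's theorem from \cite{MooreCombinatorics}. Observe first that $\theta$ is regular and, since $\mathfrak{b}\leq\cof{\ultpow{\baire}}$, uncountable; hence $\theta$ has uncountable cofinality, as required by Moore's framework, and it suffices to construct a $\theta$-Michael sequence $\langle X_\alpha:\alpha\leq\theta\rangle$.

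First I would fix a $\leq_\ult{U}$-dominating sequence $\langle g_\alpha:\alpha<\theta\rangle$ in $\baire$ that is \emph{strictly} $\leq_\ult{U}$-increasing, i.e. $g_\beta\leq_\ult{U} g_\alpha$ and $g_\alpha\nleq_\ult{U} g_\beta$ whenever $\beta<\alpha<\theta$. This is available because $\leq_\ult{U}$ is a linear preorder: any family of size $<\theta$ fails, by minimality of $\theta$, to be $\leq_\ult{U}$-dominating, and by linearity a function witnessing the failure of domination is a strict $\leq_\ult{U}$-upper bound of that family. So, recursively, at stage $\alpha<\theta$ one applies this to $\{g_\beta:\beta<\alpha\}\cup\{h_\alpha\}$, where $\langle h_\alpha:\alpha<\theta\rangle$ is a fixed $\leq_\ult{U}$-dominating family; the sequence obtained is strictly $\leq_\ult{U}$-increasing and still dominating, since $h_\alpha\leq_\ult{U} g_\alpha$ for every $\alpha$. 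Now put
\[
X_\alpha \;=\; \{\, f\in\baire : \exists\,\beta<\alpha\ \ f\leq_\ult{U} g_\beta \,\}\quad(\alpha<\theta),\qquad X_\theta\;=\;\baire .
\]
Since the $g_\alpha$'s are strictly $\leq_\ult{U}$-increasing we have $g_\alpha\in X_{\alpha+1}\setminus X_\alpha$, so the sequence is strictly $\subseteq$-increasing; it reaches $\baire$ at stage $\theta$ because $\langle g_\alpha\rangle$ is dominating, and each $X_\alpha$ with $\alpha<\theta$ is a proper subset of $\baire$ since $\{g_\beta:\beta<\alpha\}$, of size $<\theta$, cannot be dominating.

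The heart of the matter is the second clause of the definition of a $\theta$-Michael sequence. Let $K\subseteq\baire$ be compact, let $\alpha$ have uncountable cofinality, suppose $K\subseteq X_\alpha$, and assume toward a contradiction that $K\nsubseteq X_\gamma$ for every $\gamma<\alpha$. For each $\gamma<\alpha$ choose $f_\gamma\in K\setminus X_\gamma$; unravelling the definition of $X_\gamma$ and using linearity, $g_\beta\leq_\ult{U} f_\gamma$ for all $\beta<\gamma$. Pick a cofinal sequence $\langle\gamma_\xi:\xi<\mathrm{cf}(\alpha)\rangle$ in $\alpha$. Then $\{f_{\gamma_\xi}:\xi<\mathrm{cf}(\alpha)\}$ is $\leq_\ult{U}$-cofinal in $K$, since any $f\in K\subseteq X_\alpha$ satisfies $f\leq_\ult{U} g_\beta$ for some $\beta<\alpha$, and for $\gamma_\xi>\beta$ this gives $f\leq_\ult{U} g_\beta\leq_\ult{U} f_{\gamma_\xi}$; hence $\cof{\ultpow{K}}\leq\mathrm{cf}(\alpha)\leq\alpha<\theta=\cof{\ultpow{\baire}}$. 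Because $\ult{U}$ is a Michael ultrafilter, this forces $\cof{\ultpow{K}}\leq\omega$, so we may fix a countable $\leq_\ult{U}$-dominating $\{d_i:i\in\omega\}\subseteq K$, and each $d_i\in X_\alpha$ obeys $d_i\leq_\ult{U} g_{\beta_i}$ for some $\beta_i<\alpha$. Since $\mathrm{cf}(\alpha)>\omega$ the ordinal $\gamma:=(\sup_i\beta_i)+1$ is $<\alpha$, and every $f\in K$ has $f\leq_\ult{U} d_i\leq_\ult{U} g_{\beta_i}$ for some $i$, so $f\in X_\gamma$; thus $K\subseteq X_\gamma$, contradicting the choice of the $f_\gamma$'s. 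Therefore $\langle X_\alpha:\alpha\leq\theta\rangle$ is a $\theta$-Michael sequence with $\theta$ of uncountable cofinality, and by \cite{MooreCombinatorics} a Michael space exists.

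I expect the main obstacle to be exactly the dichotomy used in the previous paragraph: a compact set $K\subseteq X_\alpha$ that is not already contained in some earlier $X_\gamma$ must be ``$\leq_\ult{U}$-small'', namely of $\leq_\ult{U}$-cofinality at most $\mathrm{cf}(\alpha)<\cof{\ultpow{\baire}}$; this is precisely the leverage that lets the Michael-ultrafilter hypothesis collapse $\cof{\ultpow{K}}$ to $\omega$ and then trap $K$ inside an $X_\gamma$ after all. The remaining ingredients — building the strictly increasing dominating sequence, checking strict $\subseteq$-monotonicity, and the small amount of ordinal arithmetic using $\mathrm{cf}(\alpha)>\omega$ — are routine.
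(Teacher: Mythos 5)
Your proof is correct and follows essentially the same route as the paper: define $X_\alpha$ from a $\leq_\ult{U}$-dominating family, use linearity of $\leq_\ult{U}$ to extract from $K\subseteq X_\alpha$ a $\leq_\ult{U}$-cofinal subfamily of size $<\cof{\ultpow{\baire}}$, invoke the Michael property to get $\cof{\ultpow{K}}\leq\omega$, and then use $\mathrm{cf}(\alpha)>\omega$ to trap $K$ in some $X_\gamma$. Your additional care in making the dominating family strictly $\leq_\ult{U}$-increasing (to guarantee strict $\subseteq$-monotonicity) and in noting that $\cof{\ultpow{\baire}}$ is regular and uncountable fills in details the paper leaves implicit.
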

\begin{proof}
    Let $\mathscr{U}$ be a Michael ultrafilter, we will construct a $\cof{\ultpow{\baire}}$-Michael sequence: Let $\{ f_\alpha : \alpha \in \cof{\ultpow{\baire}} \}$ be a $\leq_\mathscr{U}$-dominating family. For $\alpha \in \cof{\ultpow{\baire}}$, let $X_\alpha = \{ f \in \baire : \exists \beta < \alpha \ (f \leq_\mathscr{U} f_\beta) \}$. We will now show that $\{ X_\alpha : \alpha \in \cof{\ultpow{\baire}} \}$ is a Michael sequence. Suppose there is a compact set $K \subseteq \baire$ and an $\alpha$ of uncountable cofinality such that $K \subseteq X_\alpha$. For each $\beta < \alpha$ pick (if possible) a function $g_\beta \in K$ such that $f_\beta \leq_\mathscr{U} g_\beta$. Then, it follows that $\{ g_\beta : \beta < \alpha \}$ is a $\leq_\mathscr{U}$-dominating family inside $K$. Since $\mathscr{U}$ is a Michael ultrafilter, then $\cof{\ultpow{K}} \leq \omega$. Let $\{ g_n : n \in \omega \}$ be a $\leq_\mathscr{U}$-dominating family in $K$. For every $n \in \omega$, pick $\beta_n \in \alpha$ such that $g_n \leq_\mathscr{U} f_{\beta_n}$. Then it follows that $K \subseteq X_\gamma$, where $\gamma = \sup \{ \beta_n : n \in \omega \}$.
\end{proof}

Consistently one can easily build these objects. For example, if $\ult{U}$ is an ultrafilter such that $\cof{\ultpow{\baire}} = \aleph_1$, then $\ult{U}$ has to be Michael and since $\cof{\ultpow{\baire}} \leq \mathfrak{d}$ then all ultrafilters are Michael under the assumption $\mathfrak{d} = \aleph_1$. One of the main goals of this section is to show that these ultrafilters can be constructed under $\cov{\mathcal{M}} = \mathfrak{c}$. Before that, we will require the following notion.

\begin{definition}
    Given a filter $\mathscr{F}$ on $\omega,$ a compact set $K = [T] \subseteq \baire$ is \emph{internally unbounded in }$\mathscr{F}$ if for every $f \in K$ and every $s \in T$ there is $g \in K$ extending $s$ such that $\{ n \in \omega : g(n) \geq f(n)\} \in \mathscr{F}$.
\end{definition}

 Observe that, if $K$ is a compact set internally unbounded in a filter $\mathscr{F}$, then, for every $F \in \mathscr{F}$, the compact set $K|F = \{ f|F : f \in K \}$ is also internally unbounded in $\mathscr{F}$. Internally unbounded compact sets are the ones that do not have bounded neighbourhoods according to $\ult{F}$; In fact, if we consider the usual order $\leq^*$ in internally unbounded compact sets, every function only bounds a meager part of it.

\begin{remark}
    If $\mathscr{F}$ is a filter, $f \in K$, $F \in \mathscr{F}^+$ and $K \subseteq \baire$ is internally unbounded in $\mathscr{F}$ or if $K = \baire$, then $\{ g|F  \in K|F : g|F \leq^* f|F \}$ is meager in $K$.
\end{remark}
\begin{proof}
    The sets $A_n = \{g \in K : \forall i \in F \setminus n \ (g(i) \leq f(i))\}$ are clearly nowhere dense.
\end{proof}

It turns out that every non-trivial compact set has an internally unbounded compact set inside it of the same $\leq_\ult{U}$-cofinality.

\begin{lemma}
Given an ultrafilter $\mathscr{U}$ and a compact $K \subseteq \baire$ such that $\cof{\ultpow{K}}>\omega$, there is a compact $K' \subseteq K$ internally unbounded in $\mathscr{U}$ such that $\cof{\ultpow{K}} = \cof{\ultpow{K'}}$.
\end{lemma}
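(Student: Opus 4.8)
The goal is to shrink a compact $K\subseteq\baire$ with uncountable $\leq_\mathscr{U}$-cofinality to a compact subset that is internally unbounded in $\mathscr{U}$ without changing its $\leq_\mathscr{U}$-cofinality. Write $K=[T]$. The natural strategy is to prune $T$ to a subtree $T'$ by discarding, at every node, the "small" branching: for a node $s\in T$, say a choice of immediate successor $s^\frown i$ is \emph{negligible} if every $f\in K$ extending $s^\frown i$ is $\leq_\mathscr{U}$-dominated by some single function $g_s$ that depends only on $s$; more robustly, one keeps at each node only those successors through which a full-measure-of-branches worth of $\leq_\mathscr{U}$-cofinal behaviour survives. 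I would make this precise by a rank/derivative argument: define $K_0=K$, and $K_{\xi+1}$ = the set of $f\in K_\xi$ such that for every $n$, the ``local cofinality'' of $K_\xi$ above $f\restriction n$ is still uncountable; take intersections at limits. This is a Cantor–Bendixson-style derivative on compact sets with respect to the invariant $\cof{\mathscr{U}}$. Since at each successor stage either the cofinality above some node drops to $\omega$ (and that node's cone can be replaced by a countable piece) or nothing changes, the process stabilises; the stable set $K'$ has the property that above \emph{every} node the $\leq_\mathscr{U}$-cofinality is uncountable, and one checks this is exactly internal unboundedness in $\mathscr{U}$.

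The key computation is that $\cof{\ultpow{K'}}=\cof{\ultpow{K}}$. The inequality $\cof{\ultpow{K'}}\le\cof{\ultpow{K}}$ is not automatic since $K'\subseteq K$, so this needs care: a $\leq_\mathscr{U}$-dominating family in $K$ need not lie in $K'$. Instead I would argue $\ge$ and $\le$ as follows. For $\ge$: given a $\leq_\mathscr{U}$-dominating family $D'\subseteq K'$ of size $\cof{\ultpow{K'}}$, I want to extend it to a dominating family in $K$ of the same size (then $\cof{\ultpow{K}}\le\max\{\cof{\ultpow{K'}},\omega\}=\cof{\ultpow{K'}}$). The point is that $K\setminus K'$ is covered by countably many cones $\cone{s}\cap K$ each of which has $\leq_\mathscr{U}$-cofinality $\le\omega$, because we only threw away nodes above which the cofinality had become countable; adding a countable dominating set for each such cone to $D'$ and closing off gives a dominating family in $K$ of size $\cof{\ultpow{K'}}$. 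For $\le$: a dominating family in $K$ restricted to / intersected with $K'$ — more precisely, for each $f$ in a dominating family of $K$ pick (using compactness and the fact that above every node of $T'$ there are $\leq_\mathscr{U}$-large branches) a $g\in K'$ with $f\leq_\mathscr{U} g$; wait, that's backwards. Actually for $\le$ one uses: if $h\in K'$, then $h\in K$, so some member of a $\leq_\mathscr{U}$-dominating family $D$ of $K$ dominates $h$; but that member may be outside $K'$, so instead map each $f\in D$ into $K'$ by replacing it, above the largest initial segment still in $T'$, with a branch of $K'$ chosen $\leq_\mathscr{U}$-large over $f$ — internal unboundedness of $K'$ (which we already established) supplies such a branch extending any node of $T'$. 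This yields a $\leq_\mathscr{U}$-dominating family in $K'$ of size $|D|=\cof{\ultpow{K}}$.

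The main obstacle I anticipate is making the derivative/pruning argument genuinely work: one must verify that ``local $\leq_\mathscr{U}$-cofinality above $s$'' behaves well — in particular that if the cofinality above every immediate successor of $s$ is countable then so is the cofinality above $s$ (a countable union of countable dominating sets, using linearity of $\leq_\mathscr{U}$ to take sups), and conversely that the set of ``good'' nodes forms a tree $T'$ with $[T']=K'$ nonempty and with uncountable cofinality preserved at the top. One also has to be slightly careful that $\cof{\ultpow{K}}>\omega$ is not lost at a limit stage of the derivative; but since the derivative only ever removes cones of countable cofinality, and a countable union of such cannot exhaust a set of uncountable cofinality, $K'\ne\emptyset$ and $\cof{\ultpow{K'}}>\omega$, after which the two inequalities above pin the value down. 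Finally I would double-check that the stabilised $K'$, having uncountable $\leq_\mathscr{U}$-cofinality above every node, is internally unbounded in $\mathscr{U}$: given $f\in K'$ and $s\in T'$, if no $g\in K'$ extending $s$ had $\{n:g(n)\ge f(n)\}\in\mathscr{U}$, then $f$ (or rather a fixed modification of it agreeing with $s$ below $|s|$) would $\leq_\mathscr{U}$-dominate all branches of $K'$ through $s$, contradicting uncountable cofinality there.
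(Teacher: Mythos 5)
Your argument is essentially correct, but it takes a heavier route than the paper. The paper does the pruning in a single step: it lets $\Omega$ be the (countable) set of nodes $s$ with $\cone{s}\cap K\neq\emptyset$ such that $\cone{s}\cap K$ is $\leq_\mathscr{U}$-bounded by a single $f_s\in K$, and sets $K'=K\setminus\bigcup_{s\in\Omega}\cone{s}$; internal unboundedness then falls out directly by linearity (given $f\in K'$ and $t\notin\Omega$, take $\hat f\in K$ dominating $f$ and all the $f_s$ --- possible since $\cof{\ultpow{K}}>\omega$ --- and use that $\hat f$ does not bound $\cone{t}\cap K$). Your Cantor--Bendixson-style derivative on local $\leq_\mathscr{U}$-cofinality removes more at each step (cones of countable cofinality rather than cones bounded by one function), and the iteration is actually unnecessary: since each discarded cone carries a countable $\leq_\mathscr{U}$-dominating set and countably many such sets cannot be cofinal in a set of uncountable cofinality, every node whose local cofinality was uncountable in $K$ still has uncountable local cofinality after one derivative, so the process stabilises immediately. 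Both proofs ultimately rest on the same two facts you identify --- linearity of $\leq_\mathscr{U}$ and that a countable union of countably-cofinal pieces cannot exhaust $K$ --- and your accounting of $\cof{\ultpow{K}}=\cof{\ultpow{K'}}$ is sound, though it is cleaner to observe once that $K'$ is $\leq_\mathscr{U}$-cofinal in $K$ (a cofinal suborder of a linear preorder has the same cofinality) than to run the two inequalities separately. One small slip to repair: in your ``$\le$'' direction you invoke internal unboundedness of $K'$ to produce $g\in K'$ dominating an arbitrary $f$ from a dominating family of $K$, but the definition of internal unboundedness only quantifies over $f\in K'$; to dominate elements of $K\setminus K'$ you should instead argue cofinality of $K'$ in $K$ directly, by choosing $h\in K$ not $\leq_\mathscr{U}$-below any member of the countable dominating set of the removed part nor below $f$.
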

\begin{proof}
Let 
$$
\Omega = \{ s \in \bairetree : \cone{s}\cap K \neq \emptyset \wedge \exists f_s \in K (\forall g \in \cone{s}\cap K (g \leq_{\mathscr{U}} f_s)) \}
$$
Define $K'= \bigcap_{s \in \Omega} K \setminus \cone{s}$. Note that $K'\subseteq K$ is a compact set such that $\cof{\ultpow{K}} = \cof{\ultpow{K'}}$ (as we only removed functions that were bounded by the $f_s$). We will now show that $K'$ is internally unbounded in $\mathscr{U}$. Let $t \in \bairetree$ be such that $\cone{t}\cap K' \neq \emptyset$ and let $f \in K'$. Pick $\hat{f} \in K$ such that $\hat{f} \geq_\mathscr{U} f,f_s$. Then, since $t \notin \Omega$, there is $g \in \cone{t}\cap K$ such that $g \geq_\mathscr{U} \hat{f}$. Then $g \geq_\mathscr{U} f$ since $g \geq_\mathscr{U} \hat{f}$ and $g \in K'$ since $g \geq_\mathscr{U} f_s$. Therefore $K'$ is internally unbounded in $\ult{U}$.
\end{proof}

In \cite{Barty} and \cite{CanjarGenericExistence}, the authors proved that $\cov{\mathcal{M}} = \mathfrak{c}$ is equivalent to the statement that any filter of character of less than $\mathfrak{c}$ can be extended to a selective ultrafilter. We will prove a similar result for Michael ultrafilters, giving a plethora of examples under $\cov{\mathcal{M}} = \mathfrak{c}$.

\begin{theorem}
    Under $\cov{\mathcal{M}}=\mathfrak{c}$, every filter of character smaller than $\mathfrak{c}$ can be extended to a Michael ultrafilter. \label{michaelcovmeager}
\end{theorem}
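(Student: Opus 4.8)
The plan is to build the Michael ultrafilter $\mathscr{U}$ by a transfinite recursion of length $\mathfrak{c}$, starting from the given filter $\mathscr{F}_0$ of character $<\mathfrak{c}$, and using the hypothesis $\cov{\mathcal{M}} = \mathfrak{c}$ at each step to meet one requirement while keeping the character of the filter built so far below $\mathfrak{c}$. The requirements to enumerate are of two kinds. First, the usual \emph{ultrafilter requirements}: for each $A \subseteq \omega$ we must eventually decide whether $A$ or $\omega \setminus A$ enters the filter; this is handled exactly as in the classical Booth/Canjar argument, since a filter of character $<\mathfrak{c} = \cov{\mathcal{M}}$ always has a pseudointersection-type extension deciding a given set (indeed the set of $X$ in the relevant Polish space of reals coding a "diagonalizing" set that is compatible with all finitely many current generators is comeager, so it is nonempty and in fact one can pick such an $X$ avoiding fewer than $\mathfrak{c}$ meager sets). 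Second, the \emph{Michael requirements}: for each compact $K \subseteq \baire$ and each candidate "small" family, we must ensure that if $\cof{\ultpow{K}} > \omega$ then it is not witnessed by a family of size $<\cof{\ultpow{\baire}}$.

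The technical heart is to reformulate the Michael property into a form that can be secured by a single genericity step. By the two Lemmas in the excerpt, it suffices to control, for every compact $K$, whether $\cof{\ultpow{K}}$ is countable or large, and by the Lemma on internally unbounded sets we may assume $K$ is internally unbounded in the filter currently being built. The key observation is the Remark: if $K$ is internally unbounded in $\mathscr{F}$, then for any fixed $f \in K$ and any $F \in \mathscr{F}^+$, the set of $g \in K$ with $g|F \leq^* f|F$ is meager in $K$. I would turn this around: given a compact internally unbounded $K$, a countable subfamily $\{f_n : n \in \omega\} \subseteq K$, and the current filter $\mathscr{F}$ of character $<\mathfrak{c}$, the set of reals $F$ (coding a subset of $\omega$) such that $F \in \mathscr{F}^+$ and no $f_n$ dominates $\mathrm{id}$, equivalently some function of $K$, on $F$ modulo $\mathscr{U}$ — more precisely the set of $h \in K$ such that $\{n : h(n) \geq f_i(n)\}$ is $\mathscr{F}$-positive for all $i$ — is comeager in the appropriate space. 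Hence one can always diagonalize to add a set to the filter that \emph{kills} the countable family $\{f_n\}$ as a $\leq_\mathscr{U}$-dominating family in $K$, unless $K$ already had countable $\leq_\mathscr{U}$-cofinality for a reason no further generic step can destroy. Running this for all compact $K$ and all countable families, while interleaving the ultrafilter requirements, yields at the end an ultrafilter $\mathscr{U}$ in which every compact $K$ with $\cof{\ultpow{K}} > \omega$ has had every potential countable — and more generally every potential size-$<\cof{\ultpow{\baire}}$ — dominating family destroyed, which is exactly the Michael property. (One must be slightly careful: it is $\cof{\ultpow{\baire}}$, computed in the final model, that appears; since the construction does not change the continuum and $\cof{\ultpow{\baire}}$ is a regular cardinal $\leq \mathfrak{d} = \mathfrak{c}$, the bookkeeping over $\mathfrak{c}$ many small families suffices, treating a family of size $<\mathfrak{c}$ as an increasing union of initial segments along a cofinal sequence and killing each.)

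Concretely I would proceed as follows. Fix an enumeration $\langle (K_\xi, \mathcal{D}_\xi, A_\xi) : \xi < \mathfrak{c}\rangle$ of all triples where $K_\xi$ is a compact subset of $\baire$ coded by a real, $\mathcal{D}_\xi$ ranges over all subsets of $K_\xi$ of size $<\mathfrak{c}$ (enumerate their codes), and $A_\xi \subseteq \omega$; arrange that each such object appears cofinally often. Recursively build an increasing chain $\langle \mathscr{F}_\xi : \xi < \mathfrak{c}\rangle$ of filters, each of character $<\mathfrak{c}$, with $\mathscr{F}_\xi \supseteq \mathscr{F}_0$. At stage $\xi$: if $A_\xi$ is compatible with $\mathscr{F}_\xi$ (i.e. $A_\xi \in \mathscr{F}_\xi^+$) add $A_\xi$, else add $\omega \setminus A_\xi$ — this handles ultrafilter-ness in the limit. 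Then, if $\mathcal{D}_\xi$ fails to be a $\leq_{\mathscr{F}_\xi}$-dominating subset of $K_\xi$ in the strong sense that there is a real $F \in \mathscr{F}_\xi^+$ and $h \in K_\xi$ with $\{n : h(n) \geq g(n)\}$ $\mathscr{F}_\xi$-positive for all... — rather, use the genericity input: since $|\mathcal{D}_\xi| < \mathfrak{c} = \cov{\mathcal{M}}$ and $\chi(\mathscr{F}_\xi) < \mathfrak{c}$, the family of meager sets to avoid in the Polish space of pairs $(h, F) \in K_\xi \times \mathcal{P}(\omega)$ coming from (a) $F$ being $\mathscr{F}_\xi$-positive, (b) for each $g \in \mathcal{D}_\xi$, $h$ not $\leq g$ on $F$ modulo cofinite, has size $<\mathfrak{c}$, hence has nonempty complement \emph{provided} $K_\xi$ is internally unbounded in $\mathscr{F}_\xi$; by the internally-unbounded Lemma we may pass to such a $K_\xi' \subseteq K_\xi$ with the same $\mathscr{U}$-cofinality whenever $\cof{\ultpow{K_\xi}} > \omega$, so WLOG $K_\xi$ itself works. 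Pick such $(h,F)$, extend $\mathscr{F}_\xi$ by $F$ together with the set $\{n : h(n) \geq g_i(n)\}$ for $g_i$ ranging over $\mathcal{D}_\xi$ — but these need not form a filter; instead just add $F \cap \{n: h(n)\ge \sup\}$-type sets so that in the final $\mathscr{U}$, $h \not\leq_\mathscr{U} g$ for every $g \in \mathcal{D}_\xi$, hence $\mathcal{D}_\xi$ is not $\leq_\mathscr{U}$-dominating in $K_\xi$. The character stays $<\mathfrak{c}$ because at each of the $<\mathfrak{c}$ stages we add $<\mathfrak{c}$ generators. Let $\mathscr{U} = \bigcup_\xi \mathscr{F}_\xi$.

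The main obstacle, and where care is most needed, is the genericity step proving the relevant set of $(h, F)$ is comeager (equivalently, the diagonalization set is nonmeager): one must verify that for an internally unbounded compact $K$ and a fixed $g \in K$, the set $\{(h,F) : \exists^\infty n \in F\ (h(n) \geq g(n))\}$ — strengthened so that $F$ remains positive — is comeager, and that intersecting $<\mathfrak{c}$ many of these (over $g \in \mathcal{D}_\xi$) together with the finitely many closed "positivity" constraints from the current generators still leaves a point, using $\cov{\mathcal{M}} = \mathfrak{c}$. This is precisely where internal unboundedness (via the Remark, read contrapositively and uniformized) is indispensable. A secondary subtlety is ensuring that the objects we must kill are exactly families of size $<\cof{\ultpow{\baire}}$ as computed in the \emph{final} model; here I use that $\mathfrak{c}$ is not changed, that $\cof{\ultpow{\baire}}$ is regular, and that a $<\cof{\ultpow{\baire}}$-sized family, if it were dominating in some $K$ with uncountable $\mathscr{U}$-cofinality, would have one of its $\mathfrak{c}$-many (in the bookkeeping) initial segments already enumerated as some $\mathcal{D}_\xi$ after it has been "closed off," contradicting the killing step. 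Modulo these points the construction closes, and $\mathscr{U}$ is the desired Michael ultrafilter extending $\mathscr{F}_0$.
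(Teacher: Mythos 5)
Your single-step mechanism is the right one and matches the paper's: pass to an internally unbounded $K'\subseteq K$ of the same $\leq_\ult{U}$-cofinality, use the Remark to see that the functions of $K'$ dominated by a fixed $f$ on a positive set form a meager subset of $K'$, and use $\cov{\mathcal{M}}=\mathfrak{c}$ to pick a sufficiently generic $h\in K'$ whose domination sets $\{n: f(n)\leq h(n)\}$ can be added to a filter of character $<\mathfrak{c}$ (the compatibility of finitely many such sets needs the small ``take the index where $f_i$ is maximal'' argument, which you gesture at but do not carry out). However, the global architecture has a genuine gap: your construction only \emph{kills} candidate dominating families, i.e.\ it only produces lower bounds on $\cof{\ultpow{K}}$, while the Michael property requires comparing these with $\cof{\ultpow{\baire}}$, an object your construction never controls from above. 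This makes the target class ``families of size $<\cof{\ultpow{\baire}}$'' circular: you cannot enumerate the requirements without already knowing the value the final ultrafilter will give to $\cof{\ultpow{\baire}}$. The paper resolves this by running the very same genericity step against a chain of elementary submodels $M_\alpha$ and making each generic $c_{\alpha+1}$ dominate \emph{all} of $K_\alpha\cap M_\alpha$ modulo the filter; the resulting increasing chain of generics is simultaneously $\leq_\ult{U}$-cofinal in $\baire$ and in every relevant internally unbounded $K'$ (upper bound $\cof{\mathfrak{c}}$) and undominated by any family lying in a single $M_{\gamma(\beta)}$ (matching lower bound), so both cofinalities are pinned to exactly $\cof{\mathfrak{c}}$ and the comparison is automatic.

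A second, related defect is the bookkeeping. Explicitly enumerating all subfamilies $\mathcal{D}_\xi$ of size $<\mathfrak{c}$ requires $\mathfrak{c}^{<\mathfrak{c}}=\mathfrak{c}$, which does not follow from $\cov{\mathcal{M}}=\mathfrak{c}$ (and if $\mathfrak{c}$ is singular the relevant threshold cannot even be $\mathfrak{c}$). Your patch---listing a small family as an increasing union of initial segments and killing each segment---does not work: a function witnessing that a proper initial segment $\mathcal{D}_\xi$ fails to dominate may perfectly well be dominated by a later member of the family, so killing every initial segment does not kill the union. The elementary-submodel device is what replaces this enumeration in the paper: any family of size $<\cof{\mathfrak{c}}$ is captured by some $M_{\gamma(\beta)}$ and is then automatically defeated by $c_{\gamma(\beta)+1}$, with no need to have listed it in advance. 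Without some substitute for this step (an upper-bound construction for $\cof{\ultpow{\baire}}$ together with a capture argument for small families), your proof does not close.
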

\begin{proof}
    Let $F_0$ be a filter basis of size smaller than $\mathfrak{c}$. Let $\{ A_\alpha : \alpha \in \mathfrak{c}\}$ be an enumeration of $\Baire$, $\{ f_\alpha : \alpha \in \mathfrak{c}\}$ an enumeration of $\baire$ and $\{ K_\alpha : \alpha \in \mathfrak{c}\}$ an enumeration of all compact sets and $\baire$, such that each one is listed cofinally. Construct a sequence $\{ M_\alpha : \alpha \in \mathfrak{c} \}$ of subelementary models of some $H(\lambda)$ such that for all $\alpha < \mathfrak{c}$
    \begin{itemize}
        \item $|M_\alpha| < \mathfrak{c}$,
        \item $(\bigcup_{\beta < \alpha} M_\beta) \cup \{ A_\alpha, f_\alpha, K_\alpha, c_\alpha \} \subseteq M_\alpha$,
    \end{itemize}
    where $\{ c_\alpha : \alpha \in \mathfrak{c} \}$ is a collection of functions such that, for all $\alpha \in \mathfrak{c}$,  $c_{\alpha + 1} \in K_\alpha$ and $c_{\alpha + 1}$ avoids all meager sets of $K_\alpha$ that live in $M_\alpha$.
    Clearly this is possible under $\cov{\mathcal{M}}=\mathfrak{c}$. We will construct recursively an increasing sequence  $\{ F_\alpha : \alpha \in \mathfrak{c}\}$ such that $F_0 \subseteq M_0$ and for each $\alpha < \mathfrak{c}$,
    \begin{enumerate}
        \item $F_\alpha$ is a filter basis and $F_\alpha \subseteq M_{\alpha + 1}$,
        \item if $K_\alpha$ is internally unbounded in the filter generated by $\bigcup_{\beta < \alpha}F_\beta$ or if $K_\alpha = \baire$, then, for every $f \in K_\alpha \cap M_\alpha$ the set $\{ n \in \omega : f(n) \leq c_{\alpha + 1}(n) \} \in F_\alpha$.
    \end{enumerate}
    Assume that $F_\beta$ has been constructed with these properties for $\beta < \alpha$ and that $K_\alpha$ is internally unbounded in $\bigcup_{\beta < \alpha}F_\beta$ (the case $K_\alpha = \baire$ is similar). For each $A \subseteq \omega$ and each $f \in \baire$ let $B(A,f) = \{ n \in A : f(n) \leq c_{\alpha + 1}(n) \}$. Since $K_\alpha$ is internally unbounded in $\bigcup_{\beta < \alpha}F_\beta$, then $B(P,f)$ is infinite for each $P \in (\bigcup_{\beta < \alpha} F_\beta)^+$ and each $f \in K_\alpha \cap M_\alpha$ (see the Remark above). We will show that $\bigcup_{\beta < \alpha}F_\beta \cup \{ B(F,f) : f \in K_\alpha \cap M_\alpha, F \in \bigcup_{\beta < \alpha} F_\beta \}$ generates a filter basis: It is enough to show that, if $f_0, f_1, \ldots, f_k \in K_\alpha \cap M_{\alpha}$ and $F \in \bigcup_{\beta < \alpha} F_B$, then $\bigcup_{i \leq k} B(F,f_i)$ is infinite: For each $i \leq k$ let $D_i = \{j \in F : f_i(j) = \max \{f_m (j) : m \leq k\}\}$. Clearly all of the $D_i$ are in $M_\alpha$ and one of the $D_i$ is positive according to the filter generated by $\bigcup_{\beta < \alpha} F_\beta$, so for that $i$, $B(D_i,f_i)$ is infinite and is clearly contained in all of the $B(F_j,f_k).$ Then we let $F_\alpha$ be the collection of all possible finite intersections of $\bigcup_{\beta < \alpha}F_\beta \cup \{ B(F,f) : f \in K_\alpha \cap M_\alpha, F \in \bigcup_{\beta < \alpha} F_\beta \}$. It follows that $F_\alpha$ satisfies the properties we require.

    Let $\mathscr{U}$ be any ultrafilter extending $\bigcup_{\beta < \mathfrak{c}} F_\beta$, we will show that $\mathscr{U}$ is a Michael ultrafilter: First notice that if $\gamma : \cof{\mathfrak{c}} \rightarrow \mathfrak{c}$ is a cofinal sequence such that, for each $\alpha \in \cof{\mathfrak{c}}$, $K_{\gamma(\alpha)} = \baire$, then $\mathscr{D} = \{c_{\gamma(\alpha) + 1} : \alpha \in \cof{\mathfrak{c}}\}$ is a $\leq_\mathscr{U}$-dominating family of minimal size: First, for every $f \in \baire$, there is a ordinal of the form $\gamma(\beta)$ such that $f \in M_{\gamma(\beta)}$, and therefore (2) implies that $f \leq_\ult{U} c_{\gamma(\beta) + 1}$, so $\mathscr{D}$ is $\leq_\ult{U}$-dominating. On the other hand, if $\mathcal{F} \subseteq \baire$ is of size $< \cof{\mathfrak{c}}$, then there is a ordinal of the form $\gamma(\beta)$ such that $\mathcal{F} \subseteq M_{\gamma(\beta)}$. By similar reasons, $\mathcal{F}$ does not bound $c_{\gamma(\beta) + 1}$ so $\mathscr{D}$ is of minimal size, thus $\cof{\ultpow{\baire}} = \cof{\mathfrak{c}}$.

    If $K \subseteq \baire$ is a compact set such that $\cof{\ultpow{K}} > \omega$, then there is a compact $K'\subseteq K$ internally unbounded in $\mathscr{U}$ such that $\cof{\ultpow{K}} = \cof{\ultpow{K'}}$ so $K'$ is internally unbounded in the filter generated by $F_\alpha$ for every $\alpha < \mathfrak{c}$. By a similar argument as above, it follows that $\cof{\ultpow{K'}} = \cof{\mathfrak{c}}$. As a consequence, $\mathscr{U}$ is a Michael ultrafilter.
\end{proof}

A class of ultrafilters $\mathcal{C}$ \emph{exists generically} if every filter basis of size smaller than $\mathfrak{c}$ can be extended to an ultrafilter of the class $\mathcal{U}$. The generic existence of p-points is equivalent to the fact that $\mathfrak{d} = \mathfrak{c}$ (a proof can be found in \cite{ketonenppoints} or \cite{Barty}) and the existence of selective ultrafilters is equivalent to $\cov{\mathcal{M}}= \mathfrak{c}$. This notion has been studied and expanded in \cite{BrendleGenericExistence}. The previous theorem provides a natural question.

\begin{question}
    In which situations a small basis for a filter can be extended to a Michael ultrafilter?
\end{question}

Under $\mathfrak{d} = \aleph_1$ every countable basis for a filter can be extended to a Michael ultrafilter and later we will show that, under $\mathfrak{g} < \mathfrak{u}$ this would be impossible, no matter what notion of smallness we consider.

\section{Selective ultrafilters are almost Michael.}

In this section we will focus on proving that selective ultrafilters are Michael, provided that $\cof{\ultpow{\baire}} \leq \cin{\ultpow{\baire}}$. First, we will start with an example of a compact set that has an uncountable internal $\leq_\ult{U}$-cofinality.

Given a set $A \subseteq \omega$ such that $A= \{a_i : i \in \omega\}$, we define $\varphi_A$ as follows:

$$
\varphi_A (n) = \begin{cases}
    a_0 & n = a_0 \\
    a_{k+1} - a_k & n = a_{k+1}\\
    0 & \text{otherwise}.
\end{cases}
$$

Let $K_0 = \{ \varphi_A : A \subseteq \omega \}$. The relation $A \rightarrow \varphi_A$ is a continuous function, so $K_0$ is a compact subspace of $\baire$. If $i \in A\cap B$, then $\varphi_A (i) \geq \varphi_B (i)$ means that the previous number in $A$ below $i$ is smaller than the previous number of $B$ below $i$. The following proposition will provide bounds for $\cof{\ultpow{K_0}}$ whenever $\ult{U}$ is a p-point.

\begin{proposition} Let $\mathscr{U}$ be an ultrafilter on $\omega$. Then
\begin{itemize}
    \item $\cof{\ultpow{K_0}} \leq \chi (\mathscr{U})$,
    \item If $\mathscr{U}$ is a p-point, then $\aleph_0 < \cof{\ultpow{K_0}} $.
\end{itemize} \label{boundsforppoints}
\end{proposition}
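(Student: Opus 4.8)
The plan is to handle the two bullets separately, using the combinatorial description of the order $\leq_\mathscr{U}$ on $K_0$ that was highlighted just before the statement: for $i \in A \cap B$, the inequality $\varphi_A(i) \geq \varphi_B(i)$ says that the $A$-predecessor of $i$ is $\leq$ the $B$-predecessor of $i$. So $\varphi_A \leq_\mathscr{U} \varphi_B$ is governed by how the ``gaps'' of $A$ and $B$ compare on a $\mathscr{U}$-large set. For the first bullet, I would fix a basis $\mathscr{B}$ for $\mathscr{U}$ of size $\chi(\mathscr{U})$ and show that $\{\varphi_B : B \in \mathscr{B}\}$ (after replacing each $B$ by a slight coarsening if needed) is $\leq_\mathscr{U}$-dominating in $K_0$. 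The point is that given an arbitrary $A$, we want $B \in \mathscr{B}$ so that $\{i : \varphi_A(i) \le \varphi_B(i)\} \in \mathscr{U}$, i.e.\ on a $\mathscr{U}$-large set the $A$-gaps are dominated by the $B$-gaps; this is achieved by taking $B$ inside a carefully chosen element of $\mathscr{U}$ that is ``spread out'' relative to $A$ (choose $U \in \mathscr{U}$ meeting each block of a partition of $\omega$ built from $A$ in a controlled way, then $B \in \mathscr{B}$ with $B \subseteq U$). The bookkeeping here is the only slightly fiddly part, but it is routine once one writes down the right interval partition.

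For the second bullet — the main obstacle — I would argue by contradiction: suppose $\{\varphi_{A_n} : n \in \omega\}$ were $\leq_\mathscr{U}$-dominating in $K_0$. Using that $\mathscr{U}$ is a p-point, the plan is to diagonalize against the countably many sets $A_n$ to produce a single $A$ with $\varphi_A \not\leq_\mathscr{U} \varphi_{A_n}$ for every $n$. Concretely, for each $n$ the set $\{i : \varphi_A(i) > \varphi_{A_n}(i)\}$ should be in $\mathscr{U}$; equivalently, on a $\mathscr{U}$-large set the $A$-gaps should strictly exceed the $A_n$-gaps. I would build a decreasing sequence $U_0 \supseteq U_1 \supseteq \cdots$ in $\mathscr{U}$ with $U_n$ witnessing the failure of domination by $\varphi_{A_0},\dots,\varphi_{A_{n}}$ on elements of $A$ lying in $U_n$, take a pseudointersection $U \in \mathscr{U}$, and then choose $A$ so that consecutive elements of $A$ are very far apart relative to the enumeration of $U$ — far enough that whenever $i \in A \cap U$ lies beyond the finite error, its $A$-predecessor precedes its $A_n$-predecessor for all relevant $n$. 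The delicate point is coordinating ``$A$ has huge gaps'' with ``$A$ still meets each $U_n$ on an infinite set'' (so that the relevant sets are genuinely in $\mathscr{U}^+$, hence in $\mathscr{U}$), which is exactly what the p-point pseudointersection buys us: we get one $U$ that is almost contained in every $U_n$, so a single sparse $A \subseteq U$ works simultaneously for all $n$.

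An alternative, cleaner route for the second bullet is to observe that $\cof{\ultpow{K_0}} > \omega$ for a p-point follows from the fact that a p-point is not the limit of a countable $\leq_\mathscr{U}$-increasing chain in a natural way: if $\{\varphi_{A_n}\}$ were cofinal one could extract, via the p-point property applied to the sets $U_n = \{i : \varphi_{A_{n+1}}(i) \ge \varphi_{A_n}(i)\}$ (assuming WLOG the chain is $\leq_\mathscr{U}$-increasing), a pseudointersection along which the gaps of the $A_n$ stabilize enough to be outgrown. Either way, the engine is the p-point property converting a countable family into a single set to diagonalize against; I expect the writeup to spend most of its length on that diagonalization and on verifying the gap inequality $\varphi_A(i) > \varphi_{A_n}(i)$ translates correctly into the predecessor comparison. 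The first bullet, by contrast, should be short.
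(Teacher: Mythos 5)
Your overall strategy for the second bullet (use the p-point property to extract a pseudointersection and then diagonalize with a single sparse set) is the right one, and your translation of the order on $K_0$ into a comparison of predecessors is correct. But there is a genuine gap at the decisive step: the diagonalizing set $A$ must itself belong to $\mathscr{U}$, and nothing in your construction guarantees this. Since $\varphi_A$ vanishes off $A$, we have $\{ i : \varphi_A(i) > \varphi_{A_n}(i) \} \subseteq A$; so if $A \notin \mathscr{U}$ then $\varphi_A \leq_{\omega \setminus A} \varphi_{A_n}$ and $\varphi_A$ is $\leq_\mathscr{U}$-dominated by \emph{every} element of $K_0$, killing the contradiction. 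Your justification --- that $A$ meets each $U_n$ in an infinite set and hence lies in $\mathscr{U}^+ = \mathscr{U}$ --- does not work: meeting countably many members of $\mathscr{U}$ infinitely does not place a set in $\mathscr{U}$, and making $A$ ``very sparse'' inside the pseudointersection $U$ only makes membership in $\mathscr{U}$ less likely (the ultrafilter may concentrate on $U \setminus A$). The repair is that you do not need huge gaps, only one point of $U$ strictly between consecutive points of $A$: split $U = \{ u_i : i \in \omega \}$ into the two alternating halves $\{ u_{2i} : i \in \omega\}$ and $\{ u_{2i+1} : i \in \omega \}$, observe that one of them, call it $A$, is in $\mathscr{U}$, and check that for almost every $i \in A$ the predecessor of $i$ in $A$ is strictly below its predecessor in $A_n$ (using $U \subseteq^* A_n$ when $A_n \in \mathscr{U}$; the case $A_n \notin \mathscr{U}$ is automatic, since $\varphi_{A_n}$ vanishes on $A \setminus A_n \in \mathscr{U}$).

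Your sketch of the first bullet also overshoots: choosing $U \in \mathscr{U}$ ``spread out'' relative to an interval partition built from $A$ is a q-point-type selectivity requirement that an arbitrary ultrafilter need not satisfy, so that route fails in general. No such choice is needed. The paper's one-line proof rests on the monotonicity $B \subseteq^* A \Rightarrow \varphi_B | B \geq^* \varphi_A | B$ (predecessors computed in a smaller set are smaller): if $A \in \mathscr{U}$, any basis element $B \subseteq A$ already satisfies $\varphi_A \leq_B \varphi_B$ with $B \in \mathscr{U}$; if $A \notin \mathscr{U}$, then $\varphi_A$ is zero on $\omega \setminus A \in \mathscr{U}$ and is dominated by anything. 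Hence $\{ \varphi_B : B \in \mathscr{B} \}$ is $\leq_\mathscr{U}$-cofinal in $K_0$ for any basis $\mathscr{B}$ of $\mathscr{U}$, giving $\cof{\ultpow{K_0}} \leq \chi(\mathscr{U})$ with no further bookkeeping.
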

\begin{proof} Both points easily follow from the fact that if $A \subseteq^*B$ then $\varphi_A | A \geq^* \varphi_B | A$.
\end{proof}

We will be able to be more precise when we consider q-points. For that purpose, we will need to study the coinitiality of the ultrapower, originally considered by M. Canjar in \cite{Canjar2}. The author proved that $\mathfrak{b} \leq \cin{\ultpow{\baire}} \leq \mathfrak{d}$. Alternatively, the reader can convince themself that, when restricted to the space of finite to one functions from $\omega$ to $\omega$, $\mathfrak{d}$ is the cardinality of the smallest family that is dominating with the reverse order $\geq^*$ and $\mathfrak{b}$ is the cardinality of the smallest unbounded family with the reverse order and therefore $\mathfrak{b} \leq \cin{\ultpow{\baire}} \leq \mathfrak{d}$. In \cite{BlassHeike}, the authors proved that $\mathfrak{g} \leq \cof{\ultpow{\baire}}$. Using the same ideas, we were able to get a similar bound for the coinitiality.

\begin{theorem}
    For every ultrafilter $\mathscr{U}$, $\mathfrak{g} \leq \cin{\ultpow{\baire}}$. \label{blassheikeforcoinitiality}
\end{theorem}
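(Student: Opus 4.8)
The plan is to adapt the Blass–Mildenberg argument that $\mathfrak{g} \leq \cof{\ultpow{\baire}}$ from \cite{BlassHeike} to the coinitiality setting, exploiting the fact that $\cin{\ultpow{\baire}}$ measures $\leq_\ult{U}$-small families of non-decreasing finite-to-one functions. Suppose toward a contradiction that $\mathscr{D} = \{ g_\xi : \xi < \kappa \}$ is a $\leq_\ult{U}$-coinitial family of non-decreasing finite-to-one functions with $\kappa < \mathfrak{g}$; I want to manufacture $\kappa$ groupwise dense sets whose intersection witnesses a contradiction. For each $\xi$, the function $g_\xi$ naturally induces an interval partition $\langle I^\xi_n : n \in \omega\rangle$ of $\omega$ (for instance, let $I^\xi_n$ collect those $m$ with $g_\xi(m) = n$, discarding empty blocks and coalescing to get genuine intervals, using finite-to-one-ness to ensure the blocks are finite). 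Define $G_\xi \subseteq \Baire$ to be the collection of all $A \in \Baire$ such that $A$ meets only finitely many consecutive blocks of a coarsening of $\langle I^\xi_n\rangle$ in a sparse way — concretely, $A \in G_\xi$ iff the function $n \mapsto \min(A \setminus n)$ grows faster (on a $\ult{U}$-large set, or cofinitely) than what $g_\xi$ can keep up with; one must choose the precise definition so that $G_\xi$ is downward closed, closed under finite modifications, and groupwise dense. Groupwise density is where the interval-partition structure of the definition of $\mathfrak{g}$ is used: given any interval partition $\langle J_m \rangle$, one finds an infinite $B$ with $\bigcup_{m \in B} J_m \in G_\xi$ by choosing $B$ sparse enough relative to the single function $g_\xi$.

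Since $\kappa < \mathfrak{g}$, the sets $\{ G_\xi : \xi < \kappa \}$ have nonempty intersection, so there is a single infinite $A \in \bigcap_{\xi<\kappa} G_\xi$. From $A$ I build a single non-decreasing finite-to-one function $f_A$ — essentially $f_A(n) = $ the index of the block of the natural $A$-induced interval partition containing $n$, or the enumeration function of $A$ read as a finite-to-one map — designed so that $A \in G_\xi$ forces $f_A \not\leq_\ult{U} g_\xi$, i.e. $\{ n : f_A(n) > g_\xi(n) \} \in \ult{U}$ (or at least $f_A \leq_\ult{U} g_\xi$ fails), for every $\xi < \kappa$. This contradicts the assumption that $\mathscr{D}$ is $\leq_\ult{U}$-coinitial in the finite-to-one functions, since a coinitial family must contain some $g_\xi$ with $g_\xi \leq_\ult{U} f_A$, and for linear orders like $\leq_\ult{U}$ that is the negation of $f_A <_\ult{U} g_\xi$.

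The main obstacle is pinning down the definitions of $G_\xi$ and of $f_A$ so that three things hold simultaneously: (i) each $G_\xi$ is genuinely groupwise dense — the finite-modification and downward closure are automatic, so the real content is producing, for an arbitrary interval partition, a sparse union landing in $G_\xi$, and this should follow from a direct combinatorial construction against the single function $g_\xi$; (ii) the passage from "$A$ is sparse relative to $g_\xi$" to "$f_A \not\leq_\ult{U} g_\xi$" actually goes through $\ult{U}$ correctly, which is delicate because coinitiality is about the $\leq_\ult{U}$-order, not $\leq^*$, so one has to verify the relevant index set is $\ult{U}$-positive, hence (as $\ult{U}$ is an ultrafilter) in $\ult{U}$; and (iii) the function $f_A$ is indeed non-decreasing and finite-to-one, so that it is a legitimate test function for the coinitial family. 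I expect (i) and the $\ult{U}$-bookkeeping in (ii) to be the technical heart, mirroring the corresponding step in \cite{BlassHeike}, with everything else being routine. One clean way to organize (i) is to observe that groupwise density of $G_\xi$ reduces to the statement that a single finite-to-one function cannot "diagonalize" against all sparse unions of a given interval partition, which is elementary.
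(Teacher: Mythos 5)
Your overall architecture is the same as the paper's: to each member $g_\xi$ of a $\leq_\ult{U}$-coinitial family you attach a collection $G_\xi$ of ``sparse'' infinite sets, you show each $G_\xi$ is groupwise dense, and you show the intersection is empty by converting any common element $A$ into a non-decreasing finite-to-one counting function (the paper uses $\nu_A(n)=|A\cap n|$, and takes $G_\xi=\{X\in\Baire : \forall k\ (g_\xi\geq_\ult{U}\nu_X+k)\}$, the quantifier over $k$ being what makes closure under finite modifications and the final stabilization work). So the skeleton is right, and steps (ii) and (iii) of your outline do go through with that choice of $\nu_A$.

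The genuine gap is in (i), precisely the step you dismiss as ``elementary'': the claim that, given an arbitrary interval partition $\langle J_m\rangle$, one can land in $G_\xi$ by ``choosing $B$ sparse enough relative to the single function $g_\xi$.'' A single sparse union does not work. If the blocks grow much faster than $g_\xi$ (say $g_\xi(n)\sim\log n$ while $|J_m|=2^{2^m}$), then for \emph{any} selected block $J_m$ the counting function of $X=\bigcup_{m\in B}J_m$ climbs by $|J_m|$ across $J_m$ while $g_\xi$ barely moves, so $\nu_X>g_\xi$ on most of $J_m$; and since membership in $G_\xi$ is governed by $\leq_\ult{U}$ rather than $\leq^*$, you cannot discard these bad points, because nothing prevents them from forming a set in $\ult{U}$. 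No amount of sparseness of $B$ repairs this — the damage is done by each individual selected block, not by their density. The paper's fix is a splitting argument: choose the sparse subfamily $\{I_{n_k}\}$ so that $g_\xi(\min I_{n_{k+1}})$ dominates the total size of the previously chosen blocks, split it into the even-indexed union $X$ and the odd-indexed union $Y$, and observe that for each $\ell$ at most one of $X,Y$ contains the block ``active'' at $\ell$, so that $\{n:\nu_X(n)+k<g_\xi(n)\}\cup\{n:\nu_Y(n)+k<g_\xi(n)\}$ is cofinite, hence in $\ult{U}$; the ultrafilter then forces one of $X,Y$ into $G_\xi$ (choosing the alternative that recurs for infinitely many $k$). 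This use of two candidate sets plus the ultrafilter to decide between them is the missing idea; without it the groupwise density of $G_\xi$, and hence the whole proof, does not close.
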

\begin{proof}
Let $\{ d_\alpha : \alpha < \cin{\ultpow{\baire}} \}$ be a $\leq_\mathscr{U}$-coinitial family of finite to one functions. Given $X \subseteq \omega$ infinite, consider a function $\nu_X$ defined by $\nu_X (n) = |X \cap n|$. Clearly $\nu_X$ is non-decreasing and finite to one.

    For each $\alpha < \cin{\ultpow{\baire}}$ let
    $$
    G_\alpha = \{ X \in [\omega]^\omega : \forall k \in \omega \ ( d_\alpha \geq_\mathscr{U} \nu_X + k )\}.
    $$

    Since the family of the $d_\alpha$ is $\leq_\mathscr{U}$-coinitial, we have that $\bigcap_{\alpha < \mathrm{cin}(\baire / \mathscr{U})}G_\alpha = \emptyset$. We only have to show that each $G_\alpha$ is groupwise dense:

If $X \subseteq^* Y$, then there is $k\in \omega$ such that $Y \cap n$ has more elements than $X \cap n \setminus k$ for all $n \in \omega$. Thus $\nu_X \leq \nu_Y + k $ for some $k \in \omega$. So it follows that $G_\alpha$ is closed under subsets and finite modifications. For the other part let $\{ I_n : n \in \omega \}$ be an interval partition of $\omega$. Inductively, pick a subfamily $\{ I_{n_k} : k\in \omega \}$ such that if $\ell = \min I_{n_{k + 1}}$ then $d_\alpha(\ell) > |\bigcup_{j \leq k} I_{n_j}  | + k$. Let $X = \bigcup_{i \in \omega} I_{n_{2i}}$ and $Y = \bigcup_{i \in \omega} I_{n_{2i + 1}}$. To finish the proof, we will observe that, given an $i\in\omega$, $\{ n\in \omega : \nu_X (n) + i < d_\alpha (n) \} \cup \{ n\in \omega : \nu_Y (n) + i < d_\alpha (n) \}$ is cofinite:
Let $\ell > \min I_{n_{i+1}}$ and assume that $j$ is the smallest number such that $\max I_{n_j} < \min I_{n_{j + 1}} \leq \ell$. We will assume that $j = 2i$ for some $i \in \omega$ (the other case is similar). Then $\nu_X (\ell) = \nu_X (\max I_{2j} + 1) \leq \nu_{X \cup Y} (\min I_{2j + 1}) < d_\alpha (\min I_{2j + 1}) - i \leq d_\alpha (\ell) - i$. In other words $\ell \in \{ n\in \omega : \nu_X (n) + i < d_\alpha (n) \}$. This implies that either $\nu_X + i \leq_{\mathscr{U}} d_\alpha$ or $\nu_Y + i \leq_{\mathscr{U}} d_\alpha$, so taking the one that repeats for infinitely many $i$ we get that either $X \in G_\alpha$ or $Y \in G_\alpha$.

    Therefore $\{ G_\alpha : \alpha < \cin{\ultpow{\baire}} \}$ is a family of groupwise dense sets with empty intersection, thus $\mathfrak{g} \leq \cin{\ultpow{\baire}}$.
\end{proof}

We will now give a characterization of $\cin{\ultpow{\baire}}$ for whenever $\ult{U}$ is a q-point. For every $A \in \Baire$, pick any non-decreasing $h_A \in \baire$ such that for all $i \in A$, $i$ is the $h_A(i)$-th element of $A$. Clearly $h_A$ is finite to one.

We have the following easy observation:

\begin{remark}
If $\mathscr{U}$ is a q-point, then for every finite to one $f\in \baire$ there is $A \in \mathscr{U}$ such that $h_A \leq f.$
\end{remark}
\begin{proof}
    Without loss of generality, $f$ is non-decreasing. Pick $A \in \mathscr{U}$ such that for all $n\in\omega$, $|A\cap f^{-1}(n)| \leq 1$. Clearly $h_A \leq f$.
\end{proof}

Before we continue, we would like to recall a well-known property of q-points.

\begin{lemma}
    If $\ult{U}$ is a q-point and $A \subseteq \omega$ is infinite, then there is $U = \{ u_i : i \in \omega \} \in \ult{U}$ such that for all $i \in \omega$, $|[u_i, u_{i + 1}) \cap A| \geq u_i + 1$.
\end{lemma}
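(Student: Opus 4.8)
The plan is to derive the statement from the defining property of q-points applied to a suitably chosen interval partition, engineered so that each block of the partition already contains ``enough'' of $A$. Concretely, I would first build an interval partition $\langle J_k : k \in \omega \rangle$ of $\omega$, say $J_k = [\ell_k, \ell_{k+1})$ with $\ell_0 = 0$, choosing $\ell_{k+1}$ recursively to be large enough that $|J_k \cap A| \geq \ell_k$; this is possible for every $k$ precisely because $A$ is infinite (the case $k=0$ being vacuous). The reason for this choice is that whenever an interval $[x,y)$ happens to contain a whole block $J_k$ with $x < \ell_k$, we automatically get $|[x,y) \cap A| \geq |J_k \cap A| \geq \ell_k > x$, which is exactly the inequality we want with $x$ playing the role of $u_i$.

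The naive move would be to feed $\langle J_k \rangle$ directly into the q-point property to obtain $U \in \mathscr{U}$ with $|U \cap J_k| \leq 1$, hoping that between consecutive elements $u_i \in J_a$, $u_{i+1} \in J_b$ of $U$ a complete block always lies. This is the main obstacle: q-pointness only yields $b \geq a+1$, and when $b = a+1$ the interval $[u_i, u_{i+1})$ need not contain any full block $J_k$, so the argument fails. I would get around this by applying the q-point property twice, to the two ``shifted'' coarsenings $P_1 = \langle J_{2k} \cup J_{2k+1} : k \in \omega \rangle$ and $P_2$, the interval partition whose first block is $J_0$ and whose $k$-th block ($k \geq 1$) is $J_{2k-1} \cup J_{2k}$. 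This gives $U_1, U_2 \in \mathscr{U}$ hitting each block of $P_1$, resp. $P_2$, at most once; put $U = U_1 \cap U_2 \in \mathscr{U}$. A one-line parity check shows that for every $a$ the pair $\{J_a, J_{a+1}\}$ lies inside a single block of $P_1$ (if $a$ is even) or of $P_2$ (if $a$ is odd), hence $|U \cap (J_a \cup J_{a+1})| \leq 1$. Therefore, if $u_i \in J_a$ and $u_{i+1} \in J_b$ are consecutive elements of $U$, then $b \geq a+2$, so $J_{a+1} \subseteq [u_i, u_{i+1})$.

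Finally I would combine the two ingredients. Writing $U = \{ u_i : i \in \omega \}$ in increasing order, for each $i$ we have $u_i \in J_a = [\ell_a, \ell_{a+1})$ for some $a$, so $u_i < \ell_{a+1}$, and $J_{a+1} \subseteq [u_i, u_{i+1})$, whence
$$
|[u_i, u_{i+1}) \cap A| \ \geq\ |J_{a+1} \cap A| \ \geq\ \ell_{a+1} \ >\ u_i ,
$$
i.e. $|[u_i, u_{i+1}) \cap A| \geq u_i + 1$, as required. The only bookkeeping to be careful about is to make sure the estimate $|J_{a+1} \cap A| \geq \ell_{a+1}$ is available for every index $a+1 \geq 1$ that can actually occur; this is harmless since $\ell_0 = 0$ and the recursion secures the bound for all $k \geq 1$.
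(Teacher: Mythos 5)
Your proof is correct and follows essentially the same route as the paper's: build an interval partition whose $k$-th block already contains enough elements of $A$ (the paper takes the block endpoints $a_k$ inside $A$ itself so that $|[a_k,a_{k+1})\cap A| = a_k+1$), then use q-pointness to find $U \in \ult{U}$ each of whose gaps swallows a full block. The only difference is that the paper leaves the ``each gap contains a full block'' step implicit, whereas you supply it explicitly via the two shifted coarsenings; that detail is exactly the right fix for the naive single application of the q-point property.
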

\begin{proof}
    If $A$ is infinite, let $A'\subseteq A$ be such that, if $A'=\{a_i : i \in \omega \}$, then $|[a_i,a_{i + 1}) \cap A| = a_i + 1$. If $U = \{ u_j : j \in \omega \} \in \ult{U}$ is such that, for every $j$ there is $i$ such that $[a_i, a_{i+1}) \subseteq [ u_j, u_{j+1} )$, then from this and from the fact that $u_i \leq a_i$, it follows that for all $j \in \omega$, $|[u_j, u_{j + 1}) \cap A| \geq u_j + 1$
\end{proof}

Now we are ready to calculate the cardinal $\cof{\ultpow{K_0}}$ whenever $\ult{U}$ is a q-point.

\begin{proposition}
    If $\mathscr{U}$ is a q-point, then $\cof{\ultpow{K_0}} = \cin{\ultpow{\baire}}$. \label{boundsforqpoints}
\end{proposition}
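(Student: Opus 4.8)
The plan is to move a $\leq_\mathscr{U}$-dominating family of $K_0$ to a $\leq_\mathscr{U}$-coinitial family of finite-to-one functions and back, using the $q$-point hypothesis (through the two lemmas and the Remark just above) to control the combinatorics in both directions. I would first record two reductions. Since $\varphi_A =_\mathscr{U} 0$ whenever $A \notin \mathscr{U}$, a $\leq_\mathscr{U}$-dominating family of $K_0$ of minimal size may be taken of the form $\{\varphi_A : A \in \mathcal{A}\}$ with $\mathcal{A} \subseteq \mathscr{U}$; moreover, for $A,B \in \mathscr{U}$ one has $\varphi_A \leq_\mathscr{U} \varphi_B$ exactly when, for $\mathscr{U}$-many $n \in A \cap B$, the $B$-predecessor of $n$ lies at or below the $A$-predecessor of $n$ — informally, $B$ is at least as sparse as $A$ near a $\mathscr{U}$-large subset of $A \cap B$, and in particular $B \subseteq A$ suffices (this already reproves $\cof{\ultpow{K_0}} \leq \chi(\mathscr{U})$). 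On the coinitial side, the Remark above lets one assume a $\leq_\mathscr{U}$-coinitial family of finite-to-one functions to be of the form $\{h_A : A \in \mathcal{B}\}$, $\mathcal{B} \subseteq \mathscr{U}$. Thus both cardinals in the statement are the least size of a subfamily of $\mathscr{U}$ cofinal for a quasi-order on $\mathscr{U}$, and the task is to see that these two quasi-orders generate the same cofinality.

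For $\cof{\ultpow{K_0}} \leq \cin{\ultpow{\baire}}$, fix a $\leq_\mathscr{U}$-coinitial family $\{d_\alpha : \alpha < \cin{\ultpow{\baire}}\}$ of finite-to-one non-decreasing functions and, for each $\alpha$, use the Remark to pick a sparse $A_\alpha \in \mathscr{U}$ with $h_{A_\alpha} \leq d_\alpha$. The goal is that $\{\varphi_{A_\alpha} : \alpha\}$ is $\leq_\mathscr{U}$-cofinal in $K_0$: given $A \in \mathscr{U}$, apply the $q$-point Lemma to obtain $U = \{u_i : i \in \omega\} \in \mathscr{U}$ with $|[u_i,u_{i+1}) \cap A| \geq u_i + 1$ for all $i$, so that every point of $A$ sits inside, or just beyond, a block of $U$ already crowded with points of $A$. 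Feeding a finite-to-one function built from the block structure of $U$ into coinitiality produces $\alpha$ for which, on a $\mathscr{U}$-large subset of $A \cap A_\alpha$, $A_\alpha$ is so sparse that its predecessor of $n$ falls below the previous block of $U$, hence at or below the $A$-predecessor of $n$; that is, $\varphi_A \leq_\mathscr{U} \varphi_{A_\alpha}$.

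For the reverse inequality $\cin{\ultpow{\baire}} \leq \cof{\ultpow{K_0}}$, fix a $\leq_\mathscr{U}$-cofinal family $\{\varphi_{B_\xi} : \xi < \cof{\ultpow{K_0}}\}$ in $K_0$ with each $B_\xi \in \mathscr{U}$, and apply the $q$-point Lemma to each $B_\xi$ to replace it by a ``spread out'' $V_\xi \in \mathscr{U}$ whose blocks swallow $B_\xi$ (one checks $h_{V_\xi} \leq_\mathscr{U} h_{B_\xi}$). The family $\{h_{V_\xi} : \xi\}$ should then be $\leq_\mathscr{U}$-coinitial: given a finite-to-one $f$, the Remark gives $A \in \mathscr{U}$ with $h_A \leq f$, and a further application of the $q$-point Lemma gives a spread-out $U \in \mathscr{U}$ with $h_U \leq_\mathscr{U} f$; choosing $\xi$ with $\varphi_U \leq_\mathscr{U} \varphi_{B_\xi}$, the largeness of $\varphi_U$ on $U$ forces $\varphi_{B_\xi}$, and hence $\varphi_{V_\xi}$, to be large on a $\mathscr{U}$-large set, which through the block structure of $V_\xi$ yields $h_{V_\xi} \leq_\mathscr{U} h_U \leq_\mathscr{U} f$.

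The main obstacle is that the order on $K_0$ — a pointwise comparison of predecessor/gap functions on a $\mathscr{U}$-large set — and the $\leq_\mathscr{U}$-order on the counting functions $h_A$ are \emph{not} related pointwise: a set can be globally sparse yet locally dense just before the points of a $\mathscr{U}$-large set, and conversely, so neither ``$\varphi_A \leq_\mathscr{U} \varphi_B \Rightarrow h_B \leq_\mathscr{U} h_A$'' nor its converse holds for arbitrary $A,B \in \mathscr{U}$. The point of routing everything through the $q$-point Lemma (the one producing $U$ with $|[u_i,u_{i+1}) \cap A| \geq u_i + 1$) is that, after passing to such ``block-regular'' sets, these translations do go through on $\mathscr{U}$-large sets, with the ``$u_i + 1$'' rather than just ``$i$'' supplying the quantitative slack; making the block bookkeeping on the two sets involved line up simultaneously is the delicate core of the argument, and I expect it to be the place where the write-up needs the most care.
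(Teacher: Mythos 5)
Your overall frame --- reduce both cardinals to cofinality questions about subfamilies of $\mathscr{U}$ and use the $q$-point lemma to translate between the gap functions $\varphi_A$ and the counting functions $h_A$ --- is the right one, and your ``reverse inequality'' is essentially the paper's first direction: given a $\leq_\mathscr{U}$-cofinal $\{\varphi_{B_\xi}\}$ in $K_0$ and a finite-to-one $f$, reduce to $f \geq h_A$ with $A \in \mathscr{U}$, block-regularize $A$ into $U$ with $|[u_i,u_{i+1})\cap A|\ge u_i+1$, pick $\xi$ with $\varphi_U \leq_\mathscr{U} \varphi_{B_\xi}$, and read off $h_{B_\xi} \leq_\mathscr{U} h_A \leq f$ directly. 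Your auxiliary sets $V_\xi$ and the chain $h_{V_\xi} \leq_\mathscr{U} h_U \leq_\mathscr{U} f$ are an unnecessary detour, but that direction is morally sound.

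The genuine gap is in your first direction. You want to turn a coinitial family $\{d_\alpha\}$ into a dominating family $\{\varphi_{A_\alpha}\}$ in $K_0$, which requires the implication ``$h_{A_\alpha} \leq_\mathscr{U} g$ for a suitably slow $g$ implies $\varphi_{A_\alpha} \geq_\mathscr{U} \varphi_A$.'' That is exactly the direction of translation that fails, and regularizing $U$ (built from $A$) cannot repair it because the only set you regularize sits on the wrong side of the inequality: the hypothesis $h_{A_\alpha}(n) \leq g(n)$ says $A_\alpha$ has few elements below $n$, but its largest element below $n$ can still be $n-1$, so the $A_\alpha$-predecessor of $n$ need not fall below the previous $U$-block and $\varphi_{A_\alpha}$ can equal $1$ on a $\mathscr{U}$-large set. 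No choice of the ``finite-to-one function built from the block structure of $U$'' fixes this, since $h_{A_\alpha} \leq_\mathscr{U} g$ controls only how many elements of $A_\alpha$ lie below $n$, never where they sit. The paper's proof of $\cof{\ultpow{K_0}} \leq \cin{\ultpow{\baire}}$ is structurally different and never produces a dominating family in $K_0$: given $\lambda < \cof{\ultpow{K_0}}$ many functions $h_{A_\alpha}$, it applies the $q$-point lemma to each $A_\alpha$ (obtaining $U_\alpha$ whose blocks are crowded with points of $A_\alpha$), uses $\lambda < \cof{\ultpow{K_0}}$ to find a single $U \in \mathscr{U}$ with $\varphi_{U_\alpha} \leq_\mathscr{U} \varphi_U$ for all $\alpha$, and then runs the same one-way translation as in the other direction to conclude $h_U \leq_\mathscr{U} h_{A_\alpha}$ for all $\alpha$; a family admitting a common $\leq_\mathscr{U}$-lower bound of this form cannot be coinitial. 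In other words, both inequalities are proved using only the implication ``$\varphi$-domination plus crowded blocks yields $h$-comparison,'' never its converse, and your sketch needs that (false) converse.
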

\begin{proof} ($\cof{\ultpow{K_0}} \geq \cin{\ultpow{\baire}}$). Let $\{ \varphi_{A_\alpha} : \alpha < \cof{\ultpow{K_0}} \}$ be $\leq_\mathscr{U}$-cofinal in $K_0$, we will show that $h_{A_\alpha}$ is $\leq_{\mathscr{U}}$-coinitial: Let $f$ be a finite to one non-decreasing function. By using the previous remark we may assume that $f = h_A$ for some $A \in \mathscr{U}$. Find $U = \{ u_i : i \in \omega \} \in \mathscr{U}$ such that, for all $i \in \omega, |[u_i , u_{i+1}) \cap A| \geq u_i + 1$. Since
$\{ \varphi_{A_\alpha} : \alpha \in \cof{\ultpow{K_0}} \}$ is $\leq_\mathscr{U}$-cofinal in $K_0$, there must be an $\alpha$ such that $\varphi_U \leq_V \varphi_{A_\alpha}$ for some $V \in \mathscr{U}$. We will show that $f_{A_\alpha} \leq_{A\cap U \cap V \cap A_\alpha} f_A$: Let $i \in A\cap U \cap V \cap A_\alpha$, $k$ be the previous element of $U$ below $i$ and $j$ the previous element of $A_\alpha$ below $i$. First note that $f_{A_\alpha} (i) \leq j + 1$, since $i$ is the next element of $A_\alpha$ above $j$. Note that $[k,i)\cap A$ has at least $k + 1$ elements, so $f_A(i) \geq k + 1$. Since $i \in V \cap A_\alpha \cap U$ then $0 < \varphi_U(i) \leq \varphi_{A_\alpha}(i)$, meaning that $j \leq k$, so $f_{A_\alpha} (i) \leq j + 1 \leq k + 1 \leq f_A$.

($\cof{\ultpow{K_0}} \leq \cin{\ultpow{\baire}}$). Let $\lambda < \cof{\ultpow{K_0}}$ and let $\{ h_{A_\alpha} : \alpha < \lambda \} \subseteq P$. We will find $U \in \mathscr{U}$ such that $h_U \leq_\mathscr{U} h_{A_\alpha}$ for every $\alpha \in \lambda$: For every $\alpha$ pick $U_\alpha \in \mathscr{U}$ such that, if $U_\alpha = \{u^\alpha_i: i\in \omega\}$ then for all $i \in \omega, |[u_i^\alpha, u_{i+1}^\alpha) \cap A_\alpha| \geq u_i^\alpha + 1$. Find $U \in \mathscr{U}$ such that $\varphi_U \geq_{V_\alpha} \varphi_{U_\alpha}$ for some $V_\alpha \in \mathscr{U}$. We will show that $h_U \leq_{U\cap U_\alpha \cap V_\alpha \cap A_\alpha} h_\alpha$: Let $i \in U\cap U_\alpha \cap V_\alpha \cap A_\alpha$, $k$ be the previous element of $U_\alpha$ below $i$ and $j$ be the previous element of $U$ below $i$. Clearly $h_U(i) \leq j + 1$ and, since $|[k, i) \cap A_\alpha| \geq k + 1$ then $h_{A_\alpha} (i) \geq k + 1$. To finish the proof, note that since $i \in U \cap V_\alpha \cap U_\alpha$ then $0 < \varphi_{U_{\alpha}}(i) \leq \varphi_{U}(i)$, so $j \leq k$ so $h_{U} (i) \leq j + 1 \leq k + 1 \leq h_{A_\alpha}(i)$.
\end{proof}

Later we will show that the previous proposition may fail if we assume that $\ult{U}$ is a p-point instead of a q-point. We now know that $\cof{\ultpow{K_0}}$ is uncountable whenever $\ult{U}$ is either a p-point or a q-point. In general, we do not know the answer of the following:

\begin{question}
    Is there an ultrafilter $\ult{U}$ such that $\cof{\ultpow{K_0}} = \aleph_0$?
\end{question}

We know that there is always an ultrafilter $\ult{U}$ and a compact set $K$ such that $\cof{\ultpow{K}}$ is uncountable: Let $\mathscr{F}$ be any $\sigma$-compact p-filter (for example, the dual filter of the sumable ideal $\{ A \subseteq \omega : \sum_{i \in A} \frac{1}{i} < \infty \}$). Consider the set $K_\mathscr{F} = \{ \varphi_F : F \in \mathscr{F} \}$ (we refer the reader to the definition of $K_0$ at the beginning of this section) and let $\mathscr{U}$ be any  ultrafilter extending $\mathscr{F}$. The proof of Proposition \ref{boundsforppoints} shows that $\cof{\ultpow{K_\mathscr{F}}}$ must be uncountable, and since $K_\mathscr{F}$ is $\sigma$-compact, then there must be a compact set $K$ such that $\cof{\ultpow{K_\mathscr{F}}} = \cof{\ultpow{K}}$. We do not know if one can always construct such a compact set for an arbitrary ultrafilter.

\begin{question}
    Is it always possible, for every ultrafilter $\ult{U}$, to construct a compact set $K \subseteq \baire$ such that $\cof{\ultpow{K}}$ is uncountable? \label{question3}
\end{question}

This question will be partially answered in the final section of the paper, where we will prove that, under some conditions, there are no Michael ultrafilters. The reader interested in the theory of ideals and filters on countable sets may consult \cite{MichaelSurvey}.

We will now focus on selective ultrafilters. We will need to use an additional collection of functions. Given a compact set $K \subseteq \baire$ and $A \subseteq \omega$ we define $f_A^K \in K$ recursively: Assume $A = \{a_i : i \in \omega\}$, then:
\begin{itemize}
    \item $f_A^K | (a_0 + 1)$ is such that $f_A^K (a_0)$ has a maximum value among all $g \in K$,
    \item $f_A^K | (a_{i+1} + 1)$ is such that $f_A^K (a_{i+1})$ has a maximum value among all $g \in K \cap \langle f_A^K | a_i + 1 \rangle$.
\end{itemize}

For our next result, we will use the following well-known characterization of a selective ultrafilter, whose proof can be found in \cite{SergeUtrees}:
\begin{theorem}
    An ultrafilter $\ult{U}$ is selective if and only if every $\ult{U}$-branching tree $T$ (i.e. $T$ is a tree in $\bairetree$ and every node of $T$ splits into an element of the ultrafilter $\ult{U}$) $T$ has a branch $b \in [T]$ such that $\{b(n) : n \in \omega \} \in \ult{U}$.
\end{theorem}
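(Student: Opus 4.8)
The plan is to prove both implications separately. For the backward direction I would assume that every $\mathscr{U}$-branching tree has a branch whose range belongs to $\mathscr{U}$, and verify that $\mathscr{U}$ is a p-point and a q-point (hence selective). Given a $\subseteq$-decreasing sequence $\{U_n:n\in\omega\}\subseteq\mathscr{U}$ with $U_0=\omega$, consider the tree $T$ of all strictly increasing finite sequences $(s_0,\dots,s_{k-1})$ with $s_i\in U_i$ for every $i<k$: at a node of length $k$ the set of one-step extensions is $U_k\setminus(s_{k-1}+1)\in\mathscr{U}$ and at the root it is $U_0=\omega$, so $T$ is $\mathscr{U}$-branching; if $b\in[T]$ has $A=\{b(n):n\in\omega\}\in\mathscr{U}$ then $b$ is strictly increasing with $b(n)\in U_n$, so $A\subseteq^* U_n$ for all $n$, i.e. $A$ is a pseudointersection of the sequence lying in $\mathscr{U}$. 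For the q-point property, given an interval partition $\langle I_n:n\in\omega\rangle$ take the tree $T$ of strictly increasing finite sequences no two of whose entries lie in a common interval: an extension $n$ of a node whose largest entry lies in $I_m$ satisfies $n>s_{k-1}\ge\min I_m$, so it automatically misses $I_0,\dots,I_{m-1}$ and the only constraint is $n\notin I_m$, leaving a cofinite extension set; hence $T$ is $\mathscr{U}$-branching, and a branch with range $A\in\mathscr{U}$ gives $|A\cap I_n|\le1$ for all $n$.

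For the forward direction, let $\mathscr{U}$ be selective, let $T$ be a $\mathscr{U}$-branching tree, and write $\mathrm{succ}_T(s)=\{n:s^\frown n\in T\}$. First I would pass to the subtree $T'\subseteq T$ of strictly increasing nodes: intersecting each $\mathrm{succ}_T(s)$ with a cofinite tail keeps it in $\mathscr{U}$, so $T'$ is still $\mathscr{U}$-branching, and — the point of the reduction — the nodes of $T'$ form a subset of $[\omega]^{<\omega}$, hence are only countably many. For $n\in\omega$ set $B_n=\bigcap\{\mathrm{succ}_T(s):s\in T',\ s\subseteq\{0,\dots,n-1\}\}$; this is a \emph{finite} intersection of members of $\mathscr{U}$, so $B_n\in\mathscr{U}$, and $\langle B_n\rangle$ is $\subseteq$-decreasing. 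It then suffices to produce $A=\{a_0<a_1<\cdots\}\in\mathscr{U}$ with $a_0\in B_0=\mathrm{succ}_T(\emptyset)$ and $a_k\in B_{a_{k-1}+1}$ for all $k\ge1$: since $\{a_0,\dots,a_{k-1}\}\subseteq\{0,\dots,a_{k-1}\}$, an induction on $k$ gives $(a_0,\dots,a_{k-1})\in T$ for every $k$, so $b(k)=a_k$ is a branch of $T$ with $\{b(k):k\in\omega\}=A\in\mathscr{U}$, as desired.

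To obtain such an $A$ I would invoke the classical Ramsey property of selective ultrafilters: for every partition $[\omega]^2=C_0\cup C_1$ there is $A\in\mathscr{U}$ with $[A]^2\subseteq C_i$ for some $i$ (see e.g. \cite{BlassSurvey}). Colour $\{a,b\}\in[\omega]^2$ with $a<b$ by $0$ if $b\in B_{a+1}$ and by $1$ otherwise, and let $A$ be homogeneous. The colour cannot be $1$: with $a_0=\min A$, the set $A\cap B_{a_0+1}\cap(a_0,\infty)$ is a finite intersection of members of $\mathscr{U}$, hence nonempty, so some pair receives colour $0$. Therefore $b\in B_{a+1}$ for all $a<b$ in $A$; replacing $A$ by $A\cap B_0\in\mathscr{U}$ (which preserves this all-pairs statement, as it only deletes elements) also arranges $\min A\in B_0$, completing the argument.

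The one delicate point — and the step I expect to be the main obstacle — is producing the diagonalizing set $A$ \emph{inside} $\mathscr{U}$ in the forward direction. Choosing $a_k$ recursively large enough inside a suitable member of $\mathscr{U}$ yields a set that is ``spread out'' in exactly the right way but has no reason to lie in $\mathscr{U}$, and neither the p-point nor the q-point property on its own repairs this: the thresholds coming from a p-point pseudointersection can grow too fast for any interval-partition bookkeeping to control. What does the job is precisely the extra strength of selectivity that a suitable $2$-colouring of $[\omega]^2$ has a homogeneous set \emph{in the ultrafilter}; granting that, the remaining ingredients — the restriction to $T'$, the countability of its set of nodes, and the $B_n$ reduction — are routine.
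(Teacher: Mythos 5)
Your argument is correct, but note that the paper does not actually prove this statement: it is quoted as a known characterization with a pointer to the reference cited there, so there is no in-paper proof to compare against. Both directions of your proof check out. The backward direction (tree property implies p-point and q-point) is the routine half, and your two tree constructions are exactly right. In the forward direction, the reduction to the strictly increasing subtree $T'$ is the correct move, and the reason it matters is the one you actually use --- that only finitely many nodes of $T'$ have range contained in $\{0,\dots,n-1\}$, so each $B_n$ is a \emph{finite} intersection of successor sets and hence lies in $\ult{U}$ --- rather than the countability of $T'$, which is a red herring ($\bairetree$ is already countable). The induction showing that any $A=\{a_0<a_1<\cdots\}\in\ult{U}$ with $a_0\in B_0$ and $a_k\in B_{a_{k-1}+1}$ yields a branch through $T'$ with range $A$ is sound, and your two-coloring argument correctly produces such an $A$.

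The one thing worth flagging is that you outsource the combinatorial core of the forward direction to the classical theorem that a selective ultrafilter is Ramsey (every $2$-coloring of $[\omega]^2$ has a homogeneous set in the ultrafilter). That equivalence is standard and your citation is appropriate, so this is not a gap; but since the paper defines selectivity as ``p-point and q-point,'' a fully self-contained proof would either have to reprove the Ramsey property from that definition or run the diagonalization directly (pseudointersection of the $B_n$ via the p-point property, then an interval-partition argument via the q-point property to spread the set out so that $a_k\in B_{a_{k-1}+1}$). Your own closing remark correctly identifies this as the genuinely delicate step, and your assessment that neither the p-point nor the q-point property alone suffices is accurate.
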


We are ready to prove the following.

\begin{proposition}
    Assume $\mathscr{U}$ is a selective ultrafilter, if $K = [T] \subseteq \baire$ is a compact set internally unbounded on $\mathscr{U}$ then $\cof{\ultpow{K}} \geq \cof{\ultpow{K_0}}$. \label{K0selective}
\end{proposition}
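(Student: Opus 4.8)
The plan is to start from a $\leq_{\mathscr{U}}$-dominating family $\{ g_\alpha : \alpha < \kappa \} \subseteq K$, where $\kappa = \cof{\ultpow{K}}$, and to manufacture from it a $\leq_{\mathscr{U}}$-dominating family in $K_0$ of size $\kappa$; since $\leq_{\mathscr{U}}$ is linear on the ultrapower this yields $\cof{\ultpow{K_0}} \leq \kappa$, which is the claim. Two reductions come first. Because $\varphi_A$ vanishes off $A$, whenever $\omega \setminus A \in \mathscr{U}$ the function $\varphi_A$ is $\leq_{\mathscr{U}}$-below every member of $K_0$; so it is enough to dominate $\varphi_A$ for $A \in \mathscr{U}$. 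Second, as in the proof of Proposition \ref{boundsforqpoints}, if $U = \{u_i : i \in \omega\} \in \mathscr{U}$ satisfies $|[u_i,u_{i+1}) \cap A| \geq u_i+1$ for all $i$ (such $U$ exists by the lemma on q-points recalled above), then any $B$ which is, on a set in $\mathscr{U}$, sufficiently sparser than $U$ will satisfy $\varphi_A \leq_{\mathscr{U}} \varphi_B$ after the same interval-chasing used there. So it suffices, for each $A \in \mathscr{U}$, to produce a set $B \in \mathscr{U}$ that is sparser than the associated $U$ on a set in $\mathscr{U}$, extracted from one of the $g_\alpha$ together with at most countably much bookkeeping; the collection of all such $\varphi_B$ then has size $\kappa$.

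Fix $A \in \mathscr{U}$ and the corresponding $U \in \mathscr{U}$, and choose $\alpha$ with $f_U^K \leq_{\mathscr{U}} g_\alpha$; put $W = \{ n : f_U^K(n) \leq g_\alpha(n)\} \in \mathscr{U}$. Now I build a tree $\mathcal{T}$ of finite strictly increasing sequences from $\omega$, carrying along, at a node $s = \langle m_0 < \dots < m_{k-1}\rangle$, the greedy $K$-commitment $\sigma_s \in T$ (an initial segment of $f_{\{m_0,\dots,m_{k-1}\}}^K$) of length $m_{k-1}+1$; a one-step extension $s^\frown m$ is admitted exactly when $m$ lies in a branching set built from $g_\alpha$, $\sigma_s$ and $K$, thinned so as to force the $m_i$ to grow fast. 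The key design requirement is that each branching set be a member of $\mathscr{U}$: this is arranged by invoking the internal unboundedness of $K$ — which is inherited by each compact set $K \cap \cone{\sigma_s}$ — to guarantee the existence of $K$-branches through $\sigma_s$ that beat any prescribed function on a set in $\mathscr{U}$. Then $\mathcal{T}$ is a $\mathscr{U}$-branching tree, and by the characterisation of selective ultrafilters recalled above it has a branch whose range $B$ lies in $\mathscr{U}$; note $B$ depends only on $g_\alpha$ and countably much bookkeeping, as required.

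It remains to see that $B$ is sparser than $U$ on a set in $\mathscr{U}$. Here one uses that the greedy function $f_U^K$ realises, at each $u_i$, the maximal $K$-value over branches committed to $f_U^K \restriction (u_{i-1}+1)$; since $f_U^K \leq_{\mathscr{U}} g_\alpha$, on $W$ the function $g_\alpha$ matches or exceeds these maxima, and by the way $\mathcal{T}$ and its branching sets were set up this forces, for $i$ in a set in $\mathscr{U}$, each interval $[u_i,u_{i+1})$ to meet $B$ — equivalently, consecutive elements of $B$ (off a finite set) straddle at least one full $U$-block, i.e. $B$ is sparser than $U$ in the required sense. Combining this with the gap condition relating $U$ to $A$ exactly as in Proposition \ref{boundsforqpoints} gives $\varphi_A \leq_{\mathscr{U}} \varphi_B$.

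The main obstacle is the construction in the second paragraph: one must phrase the branching condition so that it simultaneously (a) is pushed into $\mathscr{U}$ by internal unboundedness of the sets $K \cap \cone{\sigma_s}$, and (b) has the property that every branch of $\mathcal{T}$, read against the domination $f_U^K \leq_{\mathscr{U}} g_\alpha$, produces a set $B$ sparser than $U$ on a set in $\mathscr{U}$. Reconciling these two demands, while correctly juggling the several sets in $\mathscr{U}$ that appear (the domination set $W$, the range $B$, the set $U$, and the $u_i$-indexed set from the last paragraph) and absorbing the additive constants from the $h$-to-$\varphi$ translation, is the delicate part; the rest is the routine recursion bookkeeping.
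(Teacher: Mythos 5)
Your high-level ingredients are the right ones (internal unboundedness, $\mathscr{U}$-branching trees plus selectivity, the greedy functions $f^K_U$, the sparseness reading of $\varphi$), and your opening reductions are sound, but the architecture is inverted relative to what can actually be carried out, and the crucial step is missing. The paper argues contrapositively: given a family $\{f_\alpha : \alpha < \lambda\} \subseteq K$ with $\lambda < \cof{\ultpow{K_0}}$, it applies internal unboundedness and selectivity to \emph{each} $f_\alpha$ to produce $U_\alpha \in \mathscr{U}$ such that, for suitable $s$ with $|s| \in U_\alpha$ and $i \in U_\alpha$, some branch of $K$ through $s$ beats $f_\alpha$ at $i$; then the hypothesis $\lambda < \cof{\ultpow{K_0}}$ supplies a \emph{single} $U$ with $\varphi_{U_\alpha} \leq_{\mathscr{U}} \varphi_U$ for all $\alpha$, and $f^K_U$ bounds the whole family. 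Your version reverses the quantifiers: you must build, from $g_\alpha$ alone, one set $B_\alpha \in \mathscr{U}$ that is sparser (mod $\mathscr{U}$) than \emph{every} $U$ with $f^K_U \leq_{\mathscr{U}} g_\alpha$. That is a genuine uniformization that your outline does not establish --- you yourself defer it as ``the delicate part'' --- and nothing in the proposal indicates how the branching sets of $\mathcal{T}$, which may only depend on $g_\alpha$, $\sigma_s$ and $K$, could encode enough about an arbitrary such $U$ to force the sparseness. (If instead the tree is built after fixing $A$, $U$ and $W$, as your second paragraph literally does, then $B$ depends on $A$, the family of all $\varphi_B$ has size up to $\mathfrak{c}$ rather than $\kappa$, and the cofinality bound is lost.)

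There is also a concrete directional error in the one place where you do commit to the combinatorics: you assert that ``each interval $[u_i,u_{i+1})$ meets $B$'' is equivalent to ``consecutive elements of $B$ straddle a full $U$-block.'' These are opposite conditions. What you need is $\varphi_B(i) \geq \varphi_U(i)$ on a set in $\mathscr{U}$, i.e.\ for $i = u_{k+1}$ in that set the predecessor of $i$ in $B$ is at most $u_k$, so $B \cap (u_k, u_{k+1})$ is \emph{empty}; an interval that meets $B$ in its interior is exactly the failure of this. This sign error, together with the unproved uniformization, means the proposal does not yet constitute a proof; I would recommend switching to the contrapositive formulation, where selectivity is applied to the members of the small family in $K$ and the cardinality hypothesis on $K_0$ is what produces the single escaping $U$.
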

\begin{proof}
    Let $\mathcal{F} = \langle f_\alpha : \alpha \in \lambda \rangle \subseteq K$ with $\lambda < \cof{\ultpow{K_0}}$, we will show that $\mathcal{F}$ is $\leq_{\mathscr{U}}$-bounded in $K$. For every $\alpha < \lambda$ and $s \in T$ pick $g_s^\alpha \in \cone{s} \cap K$ and $U_s^\alpha$ such that $g_s^\alpha \geq_{U_s^\alpha} f_\alpha$, which is possible since $K$ is internally unbounded in $\mathscr{U}$. The $U_s^\alpha$ may be constructed in such a way that
    \begin{itemize}
        \item if $s \subseteq t$ then $U_t^\alpha \subseteq U_s^\alpha$,
        \item $U_s^\alpha \cap (|s| + 1) = \emptyset$,
        \item if $|s| = |t| = m$ then $U_s^\alpha = U_t^\alpha = U_m^\alpha$.
    \end{itemize}
    Let $T_\alpha$ be a tree constructed in the following way:
    \begin{itemize}
        \item $\emptyset \in T_\alpha$,
        \item if $s \in T_\alpha$ then $\mathrm{succ}_{T_\alpha}(s) = U_m^\alpha$, where $m = s(|s|)$.
    \end{itemize}
    Then $T_\alpha$ is a $\mathscr{U}$-branching tree, so there is $b_\alpha \in [T_\alpha]$ such that $U_\alpha = \{b(n) : n \in \omega \} \in \mathscr{U}$.
    The following is a crucial property of $U_\alpha$.

\begin{remark}
    For every $s \in T$ such that $|s| \in U_\alpha$ and every $i \in U_\alpha$ with $|s| < i$ there is $g^\alpha_s \in K$ such that $s \subseteq g^\alpha_s$ and $g^\alpha_s(i) \geq f_\alpha(i)$.
 \end{remark}  
 \begin{proof}[Proof of the remark.] 
 Note that if $m = |s|$, then all the elements of $U_\alpha$ after $m$ have to be picked from $U^\alpha_m$, so therefore it follows that $i \in U^\alpha_{|s|} = U^\alpha_{s}$, thus $f_\alpha (i) \leq g_s^\alpha (i)$. 
 \end{proof}

The set $\{ \varphi_{U_\alpha}: \alpha \in \lambda \} \subseteq K_0$ is not $\leq_\mathscr{U}$-dominating, so there must be an $U = \{ u_i : i \in \omega \} \in \Baire$ such that for all $\alpha \in \lambda$, $\varphi_{U_\alpha} \leq_{V_\alpha} \varphi_U$ for some $V_\alpha \in \mathscr{U}$.

We will show that $f_\alpha \leq_{U\cap U_\alpha \cap V_\alpha} f_U^K$ for every $\alpha \in \lambda$: Let $i \in U\cap U_\alpha \cap V_\alpha$. Since $i \in U$, then $i = u_\ell$ for some $\ell \in \omega$. We may assume that $\ell = k+1$ (since it is trivially true for $\ell = 0$). Since $i \in U\cap U_\alpha \cap V_\alpha$, then $0 < \varphi_{U_\alpha} (i) \leq \varphi_U(i)$ so there must be an $m \in U_\alpha$ such that $u_k \leq m < u_{k+1} = i$ and, by the remark above, for $s = f^K_U | m$, there is a $g_s \in K$ such that $s \subseteq g_s$ and $f_\alpha (u_{k+1}) \leq g^\alpha_s (u_{k+1})$. Therefore, by the definition of $f^K_U(u_{k+1})$, $f_\alpha (u_{k+1}) \leq g^\alpha_s (u_{k+1}) \leq f^K_U(u_{k+1})$ and the proof is complete.
\end{proof}

Finally we are ready to prove the main theorem of this work.

\begin{theorem}
    A selective ultrafilter $\ult{U}$ is Michael if and only if $ \cof{\ultpow{\baire}}\leq \cin{\ultpow{\baire}}$. \label{maintheorem}
\end{theorem}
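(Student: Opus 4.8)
The key object is the compact set $K_0$: I will argue that both directions follow from pairing up Proposition \ref{boundsforqpoints} (for a q-point, $\cof{\ultpow{K_0}} = \cin{\ultpow{\baire}}$) with Proposition \ref{K0selective} (for a selective $\mathscr{U}$, every internally unbounded compact $[T]$ satisfies $\cof{\ultpow{[T]}} \geq \cof{\ultpow{K_0}}$), together with the reduction lemma turning an arbitrary compact set of uncountable $\leq_{\mathscr{U}}$-cofinality into an internally unbounded one of the same cofinality. Note first that $K_0$ is itself a compact subset of $\baire$, so it is a legitimate test set for the Michael property, and that a selective ultrafilter is simultaneously a p-point and a q-point, so all of the above results apply to it.

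For the implication ``Michael $\Rightarrow \cof{\ultpow{\baire}} \leq \cin{\ultpow{\baire}}$'', I would start from $K_0$. Since $\mathscr{U}$ is a p-point, Proposition \ref{boundsforppoints} gives $\cof{\ultpow{K_0}} > \omega$, so the Michael property applied to the compact set $K_0$ yields $\cof{\ultpow{K_0}} \geq \cof{\ultpow{\baire}}$. On the other hand $\mathscr{U}$ is a q-point, so Proposition \ref{boundsforqpoints} gives $\cof{\ultpow{K_0}} = \cin{\ultpow{\baire}}$. Chaining these, $\cof{\ultpow{\baire}} \leq \cof{\ultpow{K_0}} = \cin{\ultpow{\baire}}$, as wanted.

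For the converse, assume $\cof{\ultpow{\baire}} \leq \cin{\ultpow{\baire}}$ and let $K \subseteq \baire$ be an arbitrary compact set with $\cof{\ultpow{K}} > \omega$; I must show $\cof{\ultpow{K}} \geq \cof{\ultpow{\baire}}$. By the lemma on internally unbounded compact subsets, fix a compact $K' \subseteq K$ that is internally unbounded in $\mathscr{U}$ and satisfies $\cof{\ultpow{K'}} = \cof{\ultpow{K}}$. Writing $K' = [T]$ and applying Proposition \ref{K0selective} (here selectivity is used), $\cof{\ultpow{K'}} \geq \cof{\ultpow{K_0}}$; and since $\mathscr{U}$ is a q-point, Proposition \ref{boundsforqpoints} gives $\cof{\ultpow{K_0}} = \cin{\ultpow{\baire}} \geq \cof{\ultpow{\baire}}$. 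Therefore $\cof{\ultpow{K}} = \cof{\ultpow{K'}} \geq \cof{\ultpow{\baire}}$, so $\mathscr{U}$ is Michael.

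I do not expect a genuine obstacle here: all the analytic content is already packaged in Propositions \ref{boundsforppoints}, \ref{boundsforqpoints} and \ref{K0selective} and in the reduction lemma, and the argument is just their assembly. The only points requiring attention are bookkeeping — invoking the p-point property exactly where lower bounds on $\cof{\ultpow{K_0}}$ are needed and the q-point property exactly where the identity $\cof{\ultpow{K_0}} = \cin{\ultpow{\baire}}$ is needed — and the observation that $K_0$ is compact and hence may be fed into the definition of a Michael ultrafilter.
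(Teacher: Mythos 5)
Your proposal is correct and is precisely the assembly the paper intends: its proof of Theorem \ref{maintheorem} is the one-line statement that it follows from Propositions \ref{boundsforqpoints} and \ref{K0selective}, and you have correctly supplied the implicit ingredients (Proposition \ref{boundsforppoints} to guarantee $\cof{\ultpow{K_0}} > \omega$, and the lemma reducing an arbitrary compact set of uncountable $\leq_{\mathscr{U}}$-cofinality to an internally unbounded one of the same cofinality). No gaps; this is the same approach, just written out in full.
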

\begin{proof}
It follows from Propositions \ref{boundsforqpoints}, and \ref{K0selective}.
\end{proof}

As a consequence, there is a Michael space whenever there is a selective ultrafilter and $\max \{ \mathfrak{b}, \mathfrak{g} \} = \mathfrak{d}$ (as this implies that $\cof{\ultpow{\baire}} = \cin{\ultpow{\baire}}$ for all ultrafilters $\ult{U}$). It is important to notice that this is not a characterization. For example, in the Cohen model, there are selective ultrafilters such that $\cof{\ultpow{\baire}} = \cin{\ultpow{\baire}}$ (see \cite{Canjar2}, or notice that the proof of Theorem \ref{michaelcovmeager} can be modified so the resulting ultrafilter is Michael and selective), but $\max\{ \mathfrak{b}, \mathfrak{g}\} = \aleph_1 < \mathfrak{d} = \cov{\mathcal{M}} = \mathfrak{c}$. Another example can be obtained by forcing with $\mathcal{P}(\omega) / \mathrm{Fin}$; the generic filter is a selective ultrafilter and in the extension $\mathfrak{b} = \mathfrak{c}$ (see \cite{Barty}), therefore, the generic filter is a Michael ultrafilter. In \cite{Canjar2} M. Canjar showed that, in the Cohen model, there are ultrafilters such that $\cof{\ultpow{\baire}} > \cin{\ultpow{\baire}}$, although it is not clear that they can be constructed to be selective, arising the following question.

\begin{question}
    Is it true that, if $\ult{U}$ is selective, then $\cof{\ultpow{\baire}} = \cin{\ultpow{\baire}}$?
\end{question}

In the next section, we will show that there is a model with no Michael ultrafilters. In this model there are p-points but no q-points. We will this section with the following question.

\begin{question}
    Is it possible to generalize Theorem \ref{maintheorem} to q-points? \label{question5}
\end{question}

\section{A model without Michael ultrafilters}

The goal of this section is to prove that the inequality $\mathfrak{u}< \mathfrak{g}$ implies that no ultrafilter can be Michael. We will start this section showing that, under this hypothesis, some p-points cannot be Michael.

\begin{proposition}
    Under $\mathfrak{u}< \mathfrak{g}$, if $\ult{U}$ is a p-point of character $\mathfrak{u}$, then $\ult{U}$ cannot be Michael. \label{RBppointofcharacberu}
\end{proposition}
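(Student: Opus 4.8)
The plan is to exploit the compact set $K_0$ together with the bound $\cof{\ultpow{K_0}} \leq \chi(\ult{U})$ from Proposition \ref{boundsforppoints}, on one hand, and the lower bound $\mathfrak{g} \leq \cof{\ultpow{\baire}}$ (from \cite{BlassHeike}) on the other. Indeed, if $\ult{U}$ is a p-point of character $\mathfrak{u}$, then by Proposition \ref{boundsforppoints} we have $\omega < \cof{\ultpow{K_0}} \leq \chi(\ult{U}) = \mathfrak{u}$. So $K_0$ witnesses that $\cof{\ultpow{K_0}}$ is an \emph{uncountable} $\leq_\ult{U}$-cofinality of a compact set. Now if $\ult{U}$ were Michael, the definition of Michael ultrafilter would force $\cof{\ultpow{K_0}} \geq \cof{\ultpow{\baire}} \geq \mathfrak{g}$. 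Combining the two chains of inequalities yields $\mathfrak{g} \leq \cof{\ultpow{K_0}} \leq \mathfrak{u}$, contradicting the hypothesis $\mathfrak{u} < \mathfrak{g}$.

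So the entire argument, at the level of $K_0$, is essentially immediate from results already in the excerpt; no serious new combinatorics is needed. I would write it as follows. First, recall Proposition \ref{boundsforppoints}: since $\ult{U}$ is a p-point, $\aleph_0 < \cof{\ultpow{K_0}} \leq \chi(\ult{U})$. Second, observe that $\chi(\ult{U}) = \mathfrak{u}$ by hypothesis, so $\cof{\ultpow{K_0}} \leq \mathfrak{u}$. Third, suppose toward a contradiction that $\ult{U}$ is Michael; since $K_0$ is compact and $\cof{\ultpow{K_0}} > \omega$, we get $\cof{\ultpow{K_0}} \geq \cof{\ultpow{\baire}}$. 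Fourth, apply $\mathfrak{g} \leq \cof{\ultpow{\baire}}$ to conclude $\mathfrak{g} \leq \cof{\ultpow{\baire}} \leq \cof{\ultpow{K_0}} \leq \mathfrak{u}$, which contradicts $\mathfrak{u} < \mathfrak{g}$.

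The only point that deserves a moment's care — and the spot I would flag as the mild ``obstacle'' — is making sure the inequality $\cof{\ultpow{K_0}} \leq \chi(\ult{U})$ really is sharp enough: we need the basis of $\ult{U}$ of size $\mathfrak{u}$ to be used to produce a $\leq_\ult{U}$-dominating family \emph{inside} $K_0$, which is exactly the content of the first bullet of Proposition \ref{boundsforppoints} via the observation that $A \subseteq^* B$ implies $\varphi_A | A \geq^* \varphi_B | A$. Given a basis $\{B_\alpha : \alpha < \mathfrak{u}\}$ of $\ult{U}$, the family $\{\varphi_{B_\alpha} : \alpha < \mathfrak{u}\}$ is $\leq_\ult{U}$-cofinal in $K_0$: for any $A \subseteq \omega$, pick $\alpha$ with $B_\alpha \subseteq A$ (if $A \in \ult{U}$) or simply note $\varphi_A \leq_\ult{U} \varphi_{B_\alpha}$ holds on $B_\alpha \in \ult{U}$ whenever $B_\alpha \subseteq A$, and if $A \notin \ult{U}$ then $\omega \setminus A \in \ult{U}$ and $\varphi_A$ vanishes on a set in $\ult{U}$, so it is $\leq_\ult{U}$-dominated by anything. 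Hence $\cof{\ultpow{K_0}} \leq \mathfrak{u}$, and the proof concludes as above.
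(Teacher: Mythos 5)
Your proof is correct and takes essentially the same route as the paper's: both combine Proposition \ref{boundsforppoints} (giving $\aleph_0 < \cof{\ultpow{K_0}} \leq \chi(\ult{U}) = \mathfrak{u}$) with the Blass--Mildenberger bound $\mathfrak{g} \leq \cof{\ultpow{\baire}}$, so that under $\mathfrak{u} < \mathfrak{g}$ the compact set $K_0$ witnesses the failure of the Michael property. Your extra paragraph verifying the first bullet of Proposition \ref{boundsforppoints} is accurate but merely unpacks a result the paper already cites.
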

\begin{proof}
    On one hand, by Blass' and Mildenberger's theorem (see \cite{BlassHeike}), we have that $\cof{\ultpow{\baire}} \geq \mathfrak{g}$. On the other, Proposition \ref{boundsforppoints} implies that $\aleph_0 < \cof{\ultpow{K_0}} \leq  \mathfrak{u}$.
\end{proof}

We will recall the Rudin-Keisler and the Rudin-Blass orders for ultrafilter as some of the results of this section can be easily stated in that language. 

\begin{definition}
    Given two ultrafilters $\mathscr{U},\mathscr{V}$ on $\omega$, $\mathscr{U}$ is \emph{Rudin-Keisler below} $\mathscr{V}$ (denoted by $\mathscr{U} \leq_{RK} \mathscr{V}$) if there is a function $f: \omega \rightarrow \omega$ (which we will call the \emph{witness function}) such that $U \in \mathscr{U}$ if and only if $f ^{-1}[U] \in \mathscr{V}$. If, additionaly, such function can be found finite to one, then $\mathscr{U}$ is \emph{Rudin-Blass below} $\mathscr{V}$ (denoted by $\mathscr{U} \leq_{RB} \mathscr{V}$).
\end{definition}

These orders have been studied extensively. The reader interested in these orders can consult \cite{MichaelSurvey}, \cite{Nyikosultrafilter}, and \cite{LaflammeRB}. Two basic properties that we will need are the following, which are very easy to prove.

\begin{proposition} Assume that $ \mathscr{U} \leq_{RB} \mathscr{V}$: \label{RBbasic}
\begin{itemize}
    \item If $\mathscr{V}$ is a p-point, then $\mathscr{U}$ is a p-point,
    \item $\chi(\mathscr{V}) \geq \chi(\mathscr{U})$.
\end{itemize}
\end{proposition}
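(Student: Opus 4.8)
The plan is to prove both items at once by pushing sets forward along the witness function. Fix a finite-to-one $f:\omega\to\omega$ with $U\in\mathscr{U}$ iff $f^{-1}[U]\in\mathscr{V}$. The key observation, used throughout, is that $W\subseteq f^{-1}[f[W]]$ for every $W\subseteq\omega$; hence if $W\in\mathscr{V}$ then $f^{-1}[f[W]]\in\mathscr{V}$, so $f[W]\in\mathscr{U}$. A second observation is that $W\subseteq^* f^{-1}[U]$ implies $f[W]\subseteq^* U$: if $x\in f[W]\setminus U$, then any $w\in W$ with $f(w)=x$ lies in the finite set $W\setminus f^{-1}[U]$, so $f[W]\setminus U\subseteq f[W\setminus f^{-1}[U]]$ is finite.

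For the first item, I would assume $\mathscr{V}$ is a p-point and take a decreasing sequence $\{U_i:i\in\omega\}\subseteq\mathscr{U}$. Then $\{f^{-1}[U_i]:i\in\omega\}\subseteq\mathscr{V}$ is decreasing, so it has a pseudointersection $W\in\mathscr{V}$; by the two observations, $f[W]\in\mathscr{U}$ and $f[W]\subseteq^* U_i$ for all $i$, so $f[W]$ witnesses that $\mathscr{U}$ is a p-point. For the second item, I would take a base $B\subseteq\mathscr{V}$ for $\mathscr{V}$ with $|B|=\chi(\mathscr{V})$ and check that $\{f[W]:W\in B\}$ is a base for $\mathscr{U}$: its members lie in $\mathscr{U}$ by the first observation, and given $U\in\mathscr{U}$ we have $f^{-1}[U]\in\mathscr{V}$, so some $W\in B$ satisfies $W\subseteq f^{-1}[U]$, whence $f[W]\subseteq f[f^{-1}[U]]\subseteq U$. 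Thus $\chi(\mathscr{U})\leq\chi(\mathscr{V})$.

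There is no genuine obstacle; the only step needing (minimal) care is the passage from $\subseteq^*$ upstairs to $\subseteq^*$ downstairs in the p-point argument, recorded in the second observation above — and this is, incidentally, the only place where finite-to-oneness of $f$ could conceivably enter, although it is in fact not needed for either conclusion.
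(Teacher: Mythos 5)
Your proof is correct. The paper states this proposition without proof (dismissing it as ``very easy to prove''), and your argument --- pushing a pseudointersection, respectively a base, forward along the witness $f$ via the two observations $W\subseteq f^{-1}[f[W]]$ and $f[W]\setminus U\subseteq f[W\setminus f^{-1}[U]]$ --- is exactly the standard one; your closing remark that finite-to-oneness is never used, so that both conclusions already hold for $\leq_{RK}$, is also accurate.
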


One important fact about models of $\mathfrak{u} < \mathfrak{g}$ is that p-points of character $\mathfrak{u}$ always exist, as a consequence of a well-known theorem by Ketonen (\cite{ketonenppoints}) and the fact that $\mathfrak{g} \leq \mathfrak{d}$.

\begin{theorem}
    If $\ult{U}$ is an ultrafilter such that $\chi(\ult{U}) < \mathfrak{d}$, then  $\ult{U}$ is a p-point.
\end{theorem}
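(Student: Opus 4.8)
The plan is to verify the defining property of a p-point directly, so I would start from an arbitrary $\subseteq$-decreasing sequence $\langle U_n : n \in \omega \rangle \subseteq \mathscr{U}$ and produce a pseudointersection lying in $\mathscr{U}$. First I would put the sequence in a convenient form: prepending $\omega$ I may assume $U_0 = \omega$, and if $\bigcap_n U_n \in \mathscr{U}$ then it is already a pseudointersection, while otherwise $\omega \setminus \bigcap_n U_n \in \mathscr{U}$ and I may replace each $U_n$ (for $n \geq 1$) by $U_n \setminus \bigcap_m U_m$, which stays in $\mathscr{U}$; so without loss of generality $U_0 = \omega$ and $\bigcap_n U_n = \emptyset$. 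Now the function $\rho(k) = \max \{ n : k \in U_n \}$ is well defined on all of $\omega$, and the key reformulation is that a set $X$ is a pseudointersection of $\langle U_n \rangle$ precisely when $\rho | X$ is finite-to-one: indeed $X \subseteq^* U_n$ says exactly that $\{ k \in X : \rho(k) < n \}$ is finite. So the whole problem becomes: find $X \in \mathscr{U}$ on which $\rho$ is finite-to-one.

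Fix a base $\{ A_\alpha : \alpha < \kappa \}$ of $\mathscr{U}$ with $\kappa = \chi(\mathscr{U}) < \mathfrak{d}$. For each $\alpha$, since $A_\alpha \cap U_j \in \mathscr{U}$ is infinite and every element of it has $\rho$-value at least $j$, the function $e_\alpha \in \baire$ given by $e_\alpha(j) = \min \{ k \in A_\alpha : \rho(k) \geq j \}$ is well defined and nondecreasing. Because $\kappa < \mathfrak{d}$, the family $\{ e_\alpha : \alpha < \kappa \}$ is not dominating, so I can pick a nondecreasing $g \in \baire$ such that for every $\alpha$ the set $\{ j : g(j) > e_\alpha(j) \}$ is infinite (I will only use that it is nonempty). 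Then I would set
$$
X = \{ k \in \omega : k \leq g(\rho(k)) \}.
$$
On one hand $\rho$ is finite-to-one on $X$, because $\{ k \in X : \rho(k) = m \} \subseteq \{ 0, 1, \ldots, g(m) \}$. On the other hand $X$ meets every $A_\alpha$: choosing $j$ with $e_\alpha(j) < g(j)$ and letting $k = e_\alpha(j) \in A_\alpha$, we have $\rho(k) \geq j$, hence $k < g(j) \leq g(\rho(k))$ since $g$ is nondecreasing, so $k \in X \cap A_\alpha$.

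To finish, I would use the elementary fact that a set meeting every member of a base of an ultrafilter belongs to that ultrafilter: if $X \notin \mathscr{U}$ then $\omega \setminus X \in \mathscr{U}$, so some $A_\gamma \subseteq \omega \setminus X$, contradicting $X \cap A_\gamma \neq \emptyset$; hence $X \in \mathscr{U}$, and by the reformulation above $X$ is the desired pseudointersection. The step I expect to be the real crux is getting the cardinal arithmetic right: a naive attempt would ask for a single $g$ eventually dominating all the $e_\alpha$, forcing the stronger hypothesis $\kappa < \mathfrak{b}$; the point that lets $\kappa < \mathfrak{d}$ suffice is the observation that $X$ is not required to almost-contain any $A_\alpha$, only to meet each one, so a single non-dominated $g$ already does the job. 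Everything else is bookkeeping.
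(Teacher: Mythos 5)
Your proof is correct. Note that the paper does not prove this statement at all: it is quoted as a known theorem of Ketonen, with a citation in place of a proof. Your argument is a complete, self-contained verification and is essentially the standard proof of Ketonen's theorem. Each step checks out: the reduction to $U_0=\omega$ and $\bigcap_n U_n=\emptyset$ is harmless (a pseudointersection of the shrunken sequence is one of the original); the rank function $\rho$ is well defined and the equivalence ``$X$ is a pseudointersection iff $\rho|X$ is finite-to-one'' is exactly right since $\{n : k\in U_n\}$ is an initial segment; the functions $e_\alpha$ are well defined because $A_\alpha\cap U_j\in\mathscr{U}$ is nonempty; and your closing remark correctly identifies the crux, namely that a single $g$ not dominated by any $e_\alpha$ suffices because $X$ only needs to \emph{meet} each basis element rather than almost contain it, which is what lets the hypothesis be $\chi(\mathscr{U})<\mathfrak{d}$ rather than $\chi(\mathscr{U})<\mathfrak{b}$. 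The only cosmetic point: you should take $g$ nondecreasing, which you do, and this is justified since replacing $g$ by $n\mapsto\max_{m\le n}g(m)$ only enlarges the set $\{j : g(j)>e_\alpha(j)\}$.
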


Our main focus now will be to extend Proposition \ref{RBppointofcharacberu} to all ultrafilters. First, we will need the following simple lemma.
\begin{lemma}
    Assume that $\mathscr{U} \leq_{RK} \mathscr{V}$, $f$ is the witness function and $h_0,h_1 \in \baire$. Then $h_0 \leq_\mathscr{U} h_1$ if and only if $h_0 \cdot f \leq_\mathscr{V} h_1 \cdot f$.
\end{lemma}
\begin{proof}
    It follows from the definitions.
\end{proof}

We will now try to analyze all possible $\leq_\ult{U}$ cofinalities of compact sets. 

\begin{definition}
    Given an ultrafilter $\mathscr{U}$, \emph{the spectrum of compact sets} is
$$
\mathsf{spec}(\mathscr{U}) = \{ \cof{\ultpow{K}} : K\subseteq \baire \text{ is compact}\}.
$$
\end{definition}

We can easily show that the spectrum will get smaller whenever we consider ultrafilters that are lower in the Rudin-Keisler order.

\begin{proposition}
    If $\mathscr{U} \leq_{RK} \mathscr{V}$, then $\mathsf{spec}(\mathscr{U}) \subseteq \mathsf{spec}(\mathscr{V})$. \label{RBspectrumgetssmaller}
\end{proposition}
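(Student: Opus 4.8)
The plan is to show that every $\leq_\mathscr{U}$-cofinality of a compact set realized under $\mathscr{U}$ is also realized under $\mathscr{V}$, by pulling compact sets back along the witness function $f : \omega \to \omega$ of $\mathscr{U} \leq_{RK} \mathscr{V}$. Fix a compact $K = [T] \subseteq \baire$ and let $\kappa = \cof{\ultpow{K}}$; I want to produce a compact $K^* \subseteq \baire$ with $\cof{\vltpow{K^*}} = \kappa$. The natural candidate is the ``composition'' space: given $g \in K$, form the function $g \circ f \in \baire$ (i.e. $n \mapsto g(f(n))$), and set $K^* = \{ g \circ f : g \in K \}$. Since $g \mapsto g \circ f$ is continuous and $K$ is compact, $K^*$ is compact.

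The key computational input is the Lemma immediately preceding the statement: for $h_0, h_1 \in \baire$, $h_0 \leq_\mathscr{U} h_1$ iff $h_0 \circ f \leq_\mathscr{V} h_1 \circ f$. First I would use this to argue that if $D \subseteq K$ is $\leq_\mathscr{U}$-cofinal in $K$, then $D^* = \{ g \circ f : g \in D \}$ is $\leq_\mathscr{V}$-cofinal in $K^*$: any element of $K^*$ is $h \circ f$ for some $h \in K$, pick $g \in D$ with $h \leq_\mathscr{U} g$, and conclude $h \circ f \leq_\mathscr{V} g \circ f$ by the Lemma. Hence $\cof{\vltpow{K^*}} \leq \cof{\ultpow{K}} = \kappa$. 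For the reverse inequality, suppose $E^* \subseteq K^*$ is $\leq_\mathscr{V}$-cofinal in $K^*$ with $|E^*| < \kappa$; for each element of $E^*$ choose a preimage in $K$ to get $E \subseteq K$ with $|E| \le |E^*|$ and $E^* \subseteq \{ g \circ f : g \in E \}$. Given $h \in K$, we have $h \circ f \in K^*$, so there is $g \circ f \in E^*$ (hence $g \in E$) with $h \circ f \leq_\mathscr{V} g \circ f$, which by the Lemma gives $h \leq_\mathscr{U} g$. Thus $E$ is $\leq_\mathscr{U}$-cofinal in $K$ of size $< \kappa$, contradicting $\cof{\ultpow{K}} = \kappa$. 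Therefore $\cof{\vltpow{K^*}} = \kappa$, so $\kappa \in \mathsf{spec}(\mathscr{V})$, and since $\kappa$ was an arbitrary element of $\mathsf{spec}(\mathscr{U})$ we are done.

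The only genuine subtlety — and the step I would be most careful about — is a mild bookkeeping issue: $f$ need not be injective or finite-to-one (we only assume $\mathscr{U} \leq_{RK} \mathscr{V}$, not $\leq_{RB}$), so a priori $g \circ f$ could fail to remember enough of $g$ for the preimage-choosing argument. This is not actually a problem, because the cofinality computation only ever uses the Lemma, which holds for arbitrary witness functions; we never need $g \mapsto g \circ f$ to be injective, only that cofinal families map to cofinal families and conversely. One should, however, double-check the degenerate case where the cardinal in question is $\aleph_0$ or finite (finite cofinality occurs only for trivial compact sets), but the same argument goes through verbatim. I would also remark in passing that the same proof with $\leq_{RB}$ in place of $\leq_{RK}$ would additionally preserve the finite-to-one structure, which is relevant to the coinitiality side, though that is not needed here.
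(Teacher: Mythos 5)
Your proof is correct and is essentially the paper's own argument: the paper also forms the compact set $K' = \{ g \cdot f : g \in K \}$ and invokes the preceding lemma, leaving the cofinality verification you spell out as "follows easily." Your write-up simply supplies the details (both inequalities) that the paper omits.
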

\begin{proof}
    Let $f: \omega \rightarrow \omega$ be the witness function of $\mathscr{U} \leq_{RK} \mathscr{V}$ and let $K \subseteq \baire$ be a compact set. Then $K' = \{ g \cdot f : g\in K \}$ is compact. The conclusion follows easily from the lemma above.
\end{proof}

In general, we do not know anything else about the spectrum of compact sets, even when considering the Rudin-Blass order. We do know that the cofinalities of the ultrapowers stays the same when considering two compatible ultrafilters in the Rudin-Blass order. The following is also proven in \cite{Nyikosultrafilter}.

\begin{proposition}
    If $\mathscr{U} \leq_{RB} \mathscr{V}$ then $\cof{\ultpow{\baire}} = \cof{\vltpow{\baire}}$. \label{RBequalcofinalities}
\end{proposition}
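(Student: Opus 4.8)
The plan is to establish the two inequalities $\cof{\vltpow{\baire}}\le\cof{\ultpow{\baire}}$ and $\cof{\ultpow{\baire}}\le\cof{\vltpow{\baire}}$ separately, using one elementary device. Fix a finite-to-one witness $f\colon\omega\to\omega$ for $\mathscr{U}\le_{RB}\mathscr{V}$, so that $U\in\mathscr{U}$ iff $f^{-1}[U]\in\mathscr{V}$; since $\le_{RB}$ implies $\le_{RK}$, the elementary lemma above applies: $h_0\le_\mathscr{U} h_1$ iff $h_0\circ f\le_\mathscr{V} h_1\circ f$. The device is the \emph{fiberwise maximum}: for $g\in\baire$ set $g^{\ast}(k)=\max\{\,g(n):f(n)=k\,\}$ for $k$ in the range of $f$ and $g^{\ast}(k)=0$ otherwise. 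Because $f$ is finite-to-one, every fiber is finite, so $g^{\ast}\in\baire$ is well defined, and $g(n)\le g^{\ast}(f(n))=(g^{\ast}\circ f)(n)$ for all $n$.

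First I would prove $\cof{\vltpow{\baire}}\le\cof{\ultpow{\baire}}$. Let $\{h_\alpha:\alpha<\kappa\}$ be $\le_\mathscr{U}$-dominating with $\kappa=\cof{\ultpow{\baire}}$. I claim $\{h_\alpha\circ f:\alpha<\kappa\}$ is $\le_\mathscr{V}$-dominating: given $g\in\baire$, choose $\alpha$ with $g^{\ast}\le_\mathscr{U} h_\alpha$; the lemma gives $g^{\ast}\circ f\le_\mathscr{V} h_\alpha\circ f$, and since $g\le g^{\ast}\circ f$ pointwise we conclude $g\le_\mathscr{V} h_\alpha\circ f$.

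Next, for $\cof{\ultpow{\baire}}\le\cof{\vltpow{\baire}}$, let $\{g_\beta:\beta<\lambda\}$ be $\le_\mathscr{V}$-dominating with $\lambda=\cof{\vltpow{\baire}}$. I claim $\{g_\beta^{\ast}:\beta<\lambda\}$ is $\le_\mathscr{U}$-dominating: given $h\in\baire$, choose $\beta$ with $h\circ f\le_\mathscr{V} g_\beta$. Since $g_\beta(n)\le g_\beta^{\ast}(f(n))$ always, the $\mathscr{V}$-large set $\{n:h(f(n))\le g_\beta(n)\}$ is contained in $\{n:h(f(n))\le g_\beta^{\ast}(f(n))\}=f^{-1}[\,\{k:h(k)\le g_\beta^{\ast}(k)\}\,]$, so this preimage lies in $\mathscr{V}$, whence $\{k:h(k)\le g_\beta^{\ast}(k)\}\in\mathscr{U}$ by the witness property; that is exactly $h\le_\mathscr{U} g_\beta^{\ast}$.

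I do not expect a genuine obstacle. The only points requiring care are that $g^{\ast}$ really is a member of $\baire$ — which is precisely where finite-to-oneness of the witness enters, so the argument does not extend verbatim to $\le_{RK}$ — and that the assignments $h_\alpha\mapsto h_\alpha\circ f$ and $g_\beta\mapsto g_\beta^{\ast}$ do not increase cardinality, so the resulting dominating families have size $\le\kappa$ and $\le\lambda$ respectively. Putting the two inequalities together gives $\cof{\ultpow{\baire}}=\cof{\vltpow{\baire}}$.
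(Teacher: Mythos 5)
Your proof is correct and follows essentially the same route as the paper: both directions push a dominating family through the witness $f$ (via $h\mapsto h\circ f$ one way and a fiberwise construction the other), and your fiberwise-maximum $g^{\ast}$ is just the explicit version of the paper's ``we may assume $g$ is constant on each fiber $f^{-1}(n)$'' reduction.
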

\begin{proof}
     Assume that $f: \omega \rightarrow \omega$ is the finite to one witness of $\mathscr{U} \leq_{RB} \mathscr{V}$. 
     
     ($\leq$) Let $\{ g_\alpha : \alpha < \cof{\vltpow{\baire}} \}$ be a $\leq_\mathscr{V}$-dominating family. We may additionally assume that each $g_\alpha$ is constant on each $f^{-1}(n)$. We will show that $\{ \Hat{g}_\alpha : \alpha \in \cof{\vltpow{\baire}} \}$ is $\leq_\mathscr{U}$-dominating, where $\Hat{g}_\alpha$ is such that $\Hat{g}_\alpha \cdot f = g_\alpha$ (which is possible to find since $g_\alpha$ is constant on $f^{-1}(k)$): Let $h \in \baire$. Construct $h'$ such that $h'$ is constant on each $f^{-1}(n)$ and $h' = h \cdot f$. Pick $\alpha$ such that $g_\alpha = \Hat{g}_\alpha \cdot f \geq_\mathscr{V} h \cdot f$. By the lemma above $\Hat{g}_\alpha \geq_\mathscr{U} h$.

    ($\geq$) On the other hand, if $\{ h_\alpha : \alpha < \cof{\ultpow{\baire}} \}$ is $\leq_\mathscr{U}$-dominating, then $\{ h_\alpha \cdot f : \alpha < \cof{\ultpow{\baire}} \}$ is $\leq_\mathscr{V}$-dominating: Let $g \in \baire$. We may assume that $g$ is constant on each $f^{-1}(n)$, so there is a $\Hat{g}$ such that $\Hat{g} \cdot f = g$. Pick $\alpha$ such that $\Hat{g} \leq_\mathscr{U} h_\alpha$, then, by the lemma, $g = \Hat{g} \cdot f \leq_\mathscr{V} h_\alpha \cdot f$.
\end{proof}

A similar argument can be used to show the analogous proposition for the coinitialities of the ultrapowers. These propositions can be used to prove the following

\begin{corollary}
    The property of being Michael is closed downwards in the Rudin-Blass order, ie if $\mathscr{V}$ is Michael and $\mathscr{U} \leq_{RB }\mathscr{V}$ then $\mathscr{U}$ is Michael.
\end{corollary}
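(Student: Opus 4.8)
The plan is to observe that being Michael is really a property of the pair $\big(\mathsf{spec}(\mathscr{U}),\,\cof{\ultpow{\baire}}\big)$ alone, and then to quote Propositions \ref{RBspectrumgetssmaller} and \ref{RBequalcofinalities} to transfer the two ingredients from $\mathscr{V}$ to $\mathscr{U}$.

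First I would record the spectral reformulation: unwinding the definition of a Michael ultrafilter, an ultrafilter $\mathscr{U}$ is Michael if and only if $\mathsf{spec}(\mathscr{U})$ contains no cardinal $\kappa$ with $\omega < \kappa < \cof{\ultpow{\baire}}$; this is immediate, since by definition $\mathsf{spec}(\mathscr{U})$ is exactly the set of all $\cof{\ultpow{K}}$ for compact $K \subseteq \baire$. Next, assuming $\mathscr{U} \leq_{RB} \mathscr{V}$ with $\mathscr{V}$ Michael, I would invoke Proposition \ref{RBspectrumgetssmaller} to get $\mathsf{spec}(\mathscr{U}) \subseteq \mathsf{spec}(\mathscr{V})$ and Proposition \ref{RBequalcofinalities} to get $\cof{\ultpow{\baire}} = \cof{\vltpow{\baire}}$. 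Since $\mathscr{V}$ is Michael, $\mathsf{spec}(\mathscr{V})$ avoids the open interval $(\omega,\cof{\vltpow{\baire}}) = (\omega,\cof{\ultpow{\baire}})$; hence so does $\mathsf{spec}(\mathscr{U})$, and by the reformulation $\mathscr{U}$ is Michael.

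There is essentially no obstacle here: the whole content is already packaged in the two cited propositions, and the only thing that must be stated explicitly is the trivial spectral reformulation of the Michael property. If one prefers to avoid even that, the direct route works equally well: given a compact $K$ with $\cof{\ultpow{K}} > \omega$, take the finite-to-one witness $f$ of $\mathscr{U} \leq_{RB} \mathscr{V}$ and, exactly as in the proof of Proposition \ref{RBspectrumgetssmaller}, put $K' = \{ g \cdot f : g \in K \}$; the lemma preceding that proposition shows $g \mapsto g \cdot f$ is order-preserving and order-reflecting from $(K,\leq_{\mathscr{U}})$ onto $(K',\leq_{\mathscr{V}})$, so $\cof{\ultpow{K}} = \cof{\vltpow{K'}}$, and then $\cof{\ultpow{K}} = \cof{\vltpow{K'}} \geq \cof{\vltpow{\baire}} = \cof{\ultpow{\baire}}$ by the Michael-ness of $\mathscr{V}$ together with Proposition \ref{RBequalcofinalities}. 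The one point in this version that deserves a line of justification is that the transport map $g \mapsto g\cdot f$, although possibly not injective, induces an order isomorphism of the two relevant quotient preorders, so the cofinalities on both sides coincide on the nose.
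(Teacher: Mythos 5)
Your proposal is correct and follows exactly the paper's route: the paper's proof is a one-line citation of Propositions \ref{RBspectrumgetssmaller} and \ref{RBequalcofinalities}, and your argument simply spells out the spectral reformulation of the Michael property that makes that citation work. Both versions you give (the spectral one and the direct transport via $g \mapsto g \cdot f$) are just expansions of the same idea, so there is nothing to add.
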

\begin{proof}
This is an immediate consequence of Propositions \ref{RBspectrumgetssmaller} and \ref{RBequalcofinalities}.
\end{proof}

We do not know if the last corollary can be generalized it to the Rudin-Keisler order. An important application of $\mathfrak{u} < \mathfrak{g}$ is the principle of near coherence of filters, introduced in \cite{BlassNCF}. The \emph{near coherence of filters} state that the Rudin-Blass order is downwards directed, that is, that every two ultrafilters have a lower bound in the Rudin-Blass order. In \cite{BlassLaflamme}, A. Blass and C. Laflamme show that the near coherence of filters is a consequence of $\mathfrak{u} < \mathfrak{g}$. We will now prove the main theorem of this section.

\begin{theorem}
    Under $\mathfrak{u} < \mathfrak{g}$ no ultrafilter is Michael.
\end{theorem}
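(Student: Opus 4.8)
The plan is to reduce the general case to the already-handled case of p-points of character $\mathfrak{u}$ (Proposition \ref{RBppointofcharacberu}) by using near coherence of filters. First I would invoke the Blass--Laflamme theorem, which tells us that $\mathfrak{u} < \mathfrak{g}$ implies the near coherence of filters, i.e. the Rudin-Blass order is downwards directed. In particular, given an arbitrary ultrafilter $\mathscr{V}$, I want to find a p-point $\mathscr{U}$ of character $\mathfrak{u}$ with $\mathscr{U} \leq_{RB} \mathscr{V}$; then the Corollary above (Michaelness is downwards closed in $\leq_{RB}$) forces $\mathscr{V}$ not to be Michael, since $\mathscr{U}$ is not Michael by Proposition \ref{RBppointofcharacberu}.

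The key step, then, is producing such a $\mathscr{U}$. I would start from a fixed p-point $\mathscr{W}$ of character $\mathfrak{u}$, which exists under $\mathfrak{u} < \mathfrak{g}$ by Ketonen's theorem together with $\mathfrak{g} \leq \mathfrak{d}$ (stated in the excerpt). By near coherence, $\mathscr{W}$ and $\mathscr{V}$ have a common lower bound $\mathscr{U}$ in the Rudin-Blass order: $\mathscr{U} \leq_{RB} \mathscr{W}$ and $\mathscr{U} \leq_{RB} \mathscr{V}$. Now $\mathscr{U} \leq_{RB} \mathscr{W}$ with $\mathscr{W}$ a p-point gives, by Proposition \ref{RBbasic}, that $\mathscr{U}$ is a p-point and $\chi(\mathscr{U}) \leq \chi(\mathscr{W}) = \mathfrak{u}$; since $\mathfrak{u}$ is the least character of any ultrafilter, $\chi(\mathscr{U}) = \mathfrak{u}$. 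So $\mathscr{U}$ is a p-point of character $\mathfrak{u}$, hence not Michael, and since $\mathscr{U} \leq_{RB} \mathscr{V}$, the Corollary yields that $\mathscr{V}$ is not Michael. As $\mathscr{V}$ was arbitrary, no ultrafilter is Michael.

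The main obstacle is essentially just lining up the ingredients correctly: making sure that near coherence of filters is being used in the right form (downward directedness of $\leq_{RB}$, so that the common lower bound is itself an ultrafilter, not merely a filter), and confirming that the properties "$p$-point" and "character $\leq \mathfrak{u}$" genuinely transfer downward along $\leq_{RB}$ — both of which are exactly Proposition \ref{RBbasic}. There is no hard computation; the content is the combination of Proposition \ref{RBppointofcharacberu}, the Corollary on downward closure of Michaelness, Ketonen's existence theorem, and the Blass--Laflamme implication $\mathfrak{u} < \mathfrak{g} \Rightarrow \mathrm{NCF}$.

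\begin{proof}
    By the theorem of Blass and Laflamme \cite{BlassLaflamme}, the hypothesis $\mathfrak{u} < \mathfrak{g}$ implies the near coherence of filters, so the Rudin-Blass order on ultrafilters is downwards directed. Since $\mathfrak{u} < \mathfrak{g} \leq \mathfrak{d}$, there is a p-point $\mathscr{W}$ with $\chi(\mathscr{W}) = \mathfrak{u}$ (Ketonen \cite{ketonenppoints}). Let $\mathscr{V}$ be an arbitrary ultrafilter. By near coherence, there is an ultrafilter $\mathscr{U}$ with $\mathscr{U} \leq_{RB} \mathscr{W}$ and $\mathscr{U} \leq_{RB} \mathscr{V}$. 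By Proposition \ref{RBbasic}, $\mathscr{U}$ is a p-point and $\chi(\mathscr{U}) \leq \chi(\mathscr{W}) = \mathfrak{u}$; since $\mathfrak{u}$ is the minimal character, $\chi(\mathscr{U}) = \mathfrak{u}$. By Proposition \ref{RBppointofcharacberu}, $\mathscr{U}$ is not Michael, and since $\mathscr{U} \leq_{RB} \mathscr{V}$, the Corollary above implies that $\mathscr{V}$ is not Michael. As $\mathscr{V}$ was arbitrary, no ultrafilter is Michael.
\end{proof}
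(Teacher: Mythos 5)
Your proof is correct and follows essentially the same route as the paper: use Blass--Laflamme to get near coherence of filters, take a common Rudin-Blass lower bound of the given ultrafilter and a Ketonen p-point of character $\mathfrak{u}$, apply Proposition \ref{RBbasic} and Proposition \ref{RBppointofcharacberu} to see the lower bound is not Michael, and conclude by downward closure of Michaelness. The only difference is that you spell out a couple of steps the paper leaves implicit (that $\chi(\mathscr{U}) = \mathfrak{u}$ by minimality, and the explicit appeal to Ketonen), which is harmless.
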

\begin{proof}
Let $\ult{U}$ be an ultrafilter and let $\ult{V}$ be a p-point of character $\mathfrak{u}$. Then, by the near coherence of filters, there is an ultrafilter $\ult{W}$ such that $\ult{W} \leq_{RB} \ult{U},\ult{V}$. By Propositions \ref{RBppointofcharacberu} and \ref{RBbasic}, $\ult{W}$ is not a Michael ultrafilter, and since being Michael is downwards closed, $\ult{U}$ cannot be Michael either.
\end{proof}

No q-point can exist under the presence of Near Coherence of filters, since being q-point is closed downwards in the Rudin-Blass ordering and there is always an ultrafilter with no q-point below in the Rudin-Blass ordering (see \cite{LaflammeRB}), giving no partial answer to Question \ref{question5}. Finally, it is worth to mention that the inequality $\mathfrak{u} < \mathfrak{g}$ is consistent. A model that satisfies $\mathfrak{u}<\mathfrak{g}$ can be obtained by forcing with a countable support iteration of length $\omega_2$ of Miller's forcing (see \cite{BlassSurvey}). In this model, $\mathfrak{b} = \omega_1$, so there is a Michael space in this model. 

To finish this work, we would like to mention that the existence of Michael ultrafilters can be decided in the most common models of set theory. In the Cohen model, there are Michael ultrafilters since $\cov{\mathcal{M}} = \mathfrak{c}$, so does in every model obtained by forcing with a long finite support iteration of c.c.c. forcings over a model of CH. All ultrafilters are Michael in the Random, Silver and Sacks model since $\mathfrak{d} = \aleph_1$ (we refer the reader to \cite{Barty} for the definition and properties of these models). There are no Michael ultrafilters in the Miller's model, and there is a Michael ultrafilter after forcing with $\mathcal{P}(\omega) / \mathrm{Fin}$. However there is little we know about Michael ultrafilters in the Mathias or in the Laver model; models obtained by forcing with a countable support iteration of Mathias' or Laver's forcing respectively over a model of CH. We conclude this work with the following question.

\begin{question}
    Is there a non-Michael ultrafilter in either the Laver or in the Mathias model?
\end{question}

\textbf{Acknowledgements:} We would like to thank Michael Hru\v{s}\'{a}k and the Set-Theory and Topology seminar groups from the Instytut Matematyczny, Uniwersytet Wrocławski and from the Posgrado Conjunto de Ciencias Matem\'{a}ticas, UNAM-UMSNH for many hours of stimulating conversations.

\bibliographystyle{alphadin} 
\bibliography{main}

\begin{thebibliography}{Moo99}


\providecommand{\url}[1]{\texttt{#1}}
\expandafter\ifx\csname urlstyle\endcsname\relax
  \providecommand{\doi}[1]{doi: #1}\else
  \providecommand{\doi}{doi: \begingroup \urlstyle{rm}\Url}\fi

\bibitem[Als90]{AlsterMichaelspace}
\textsc{Alster}, K.:
\newblock The product of a {L}indel\"{o}f space with the space of irrationals
  under {M}artin's axiom.
\newblock {In: }\emph{Proc. Amer. Math. Soc.} 110 (1990), Nr. 2, 543--547.
\newblock \url{http://dx.doi.org/10.2307/2048102}. --
\newblock DOI 10.2307/2048102. --
\newblock ISSN 0002--9939,1088--6826

\bibitem[BF17]{BrendleGenericExistence}
\textsc{Brendle}, J\"{o}rg ; \textsc{Fla\v{s}kov\'{a}}, Jana:
\newblock Generic existence of ultrafilters on the natural numbers.
\newblock {In: }\emph{Fund. Math.} 236 (2017), Nr. 3, 201--245.
\newblock \url{http://dx.doi.org/10.4064/fm730-5-2016}. --
\newblock DOI 10.4064/fm730--5--2016. --
\newblock ISSN 0016--2736,1730--6329

\bibitem[BJ95]{Barty}
\textsc{Bartoszy\'{n}ski}, Tomek ; \textsc{Judah}, Haim:
\newblock \emph{Set theory}.
\newblock A K Peters, Ltd., Wellesley, MA, 1995. --
\newblock  xii+546 S. --
\newblock ISBN 1--56881--044--X. --
\newblock On the structure of the real line

\bibitem[BL89]{BlassLaflamme}
\textsc{Blass}, Andreas ; \textsc{Laflamme}, Claude:
\newblock Consistency results about filters and the number of inequivalent
  growth types.
\newblock {In: }\emph{J. Symbolic Logic} 54 (1989), Nr. 1, 50--56.
\newblock \url{http://dx.doi.org/10.2307/2275014}. --
\newblock DOI 10.2307/2275014. --
\newblock ISSN 0022--4812,1943--5886

\bibitem[Bla86]{BlassNCF}
\textsc{Blass}, Andreas:
\newblock Near coherence of filters. {I}. {C}ofinal equivalence of models of
  arithmetic.
\newblock {In: }\emph{Notre Dame J. Formal Logic} 27 (1986), Nr. 4, 579--591.
\newblock \url{http://dx.doi.org/10.1305/ndjfl/1093636772}. --
\newblock DOI 10.1305/ndjfl/1093636772. --
\newblock ISSN 0029--4527,1939--0726

\bibitem[Bla10]{BlassSurvey}
\textsc{Blass}, Andreas:
\newblock Combinatorial cardinal characteristics of the continuum.
\newblock \,Version:\,2010.
\newblock \url{http://dx.doi.org/10.1007/978-1-4020-5764-9\_7}.
\newblock {In: }\emph{Handbook of set theory. {V}ols. 1, 2, 3}.
\newblock Springer, Dordrecht, 2010. --
\newblock DOI 10.1007/978--1--4020--5764--9\_7. --
\newblock ISBN 978--1--4020--4843--2, 395--489

\bibitem[BM99]{BlassHeike}
\textsc{Blass}, Andreas ; \textsc{Mildenberger}, Heike:
\newblock On the cofinality of ultrapowers.
\newblock {In: }\emph{J. Symbolic Logic} 64 (1999), Nr. 2, 727--736.
\newblock \url{http://dx.doi.org/10.2307/2586495}. --
\newblock DOI 10.2307/2586495. --
\newblock ISSN 0022--4812,1943--5886

\bibitem[Can88]{Canjar2}
\textsc{Canjar}, Michael:
\newblock Countable ultraproducts without {CH}.
\newblock {In: }\emph{Ann. Pure Appl. Logic} 37 (1988), Nr. 1, 1--79.
\newblock \url{http://dx.doi.org/10.1016/0168-0072(88)90048-6}. --
\newblock DOI 10.1016/0168--0072(88)90048--6. --
\newblock ISSN 0168--0072,1873--2461

\bibitem[Can89]{Canjar1}
\textsc{Canjar}, R.~M.:
\newblock Cofinalities of countable ultraproducts: the existence theorem.
\newblock {In: }\emph{Notre Dame J. Formal Logic} 30 (1989), Nr. 4, 539--542.
\newblock \url{http://dx.doi.org/10.1305/ndjfl/1093635237}. --
\newblock DOI 10.1305/ndjfl/1093635237. --
\newblock ISSN 0029--4527,1939--0726

\bibitem[Can90]{CanjarGenericExistence}
\textsc{Canjar}, R.~M.:
\newblock On the generic existence of special ultrafilters.
\newblock {In: }\emph{Proc. Amer. Math. Soc.} 110 (1990), Nr. 1, 233--241.
\newblock \url{http://dx.doi.org/10.2307/2048264}. --
\newblock DOI 10.2307/2048264. --
\newblock ISSN 0002--9939,1088--6826

\bibitem[Gri71]{SergeUtrees}
\textsc{Grigorieff}, Serge:
\newblock Combinatorics on ideals and forcing.
\newblock {In: }\emph{Ann. Math. Logic} 3 (1971), Nr. 4, 363--394.
\newblock \url{http://dx.doi.org/10.1016/0003-4843(71)90011-8}. --
\newblock DOI 10.1016/0003--4843(71)90011--8. --
\newblock ISSN 0003--4843

\bibitem[Hru11]{MichaelSurvey}
\textsc{Hru\v{s}\'{a}k}, Michael:
\newblock Combinatorics of filters and ideals.
\newblock \,Version:\,2011.
\newblock \url{http://dx.doi.org/10.1090/conm/533/10503}.
\newblock {In: }\emph{Set theory and its applications} Bd. 533.
\newblock Amer. Math. Soc., Providence, RI, 2011. --
\newblock DOI 10.1090/conm/533/10503. --
\newblock ISBN 978--0--8218--4812--8, 29--69

\bibitem[Ket76]{ketonenppoints}
\textsc{Ketonen}, Jussi:
\newblock On the existence of {$P$}-points in the {S}tone-\v{C}ech
  compactification of integers.
\newblock {In: }\emph{Fund. Math.} 92 (1976), Nr. 2, 91--94.
\newblock \url{http://dx.doi.org/10.4064/fm-92-2-91-94}. --
\newblock DOI 10.4064/fm--92--2--91--94. --
\newblock ISSN 0016--2736,1730--6329

\bibitem[LZ98]{LaflammeRB}
\textsc{Laflamme}, Claude ; \textsc{Zhu}, Jian-Ping:
\newblock The {R}udin-{B}lass ordering of ultrafilters.
\newblock {In: }\emph{J. Symbolic Logic} 63 (1998), Nr. 2, 584--592.
\newblock \url{http://dx.doi.org/10.2307/2586852}. --
\newblock DOI 10.2307/2586852. --
\newblock ISSN 0022--4812,1943--5886

\bibitem[Mic63]{MichaelNormal}
\textsc{Michael}, E.:
\newblock The product of a normal space and a metric space need not be normal.
\newblock {In: }\emph{Bull. Amer. Math. Soc.} 69 (1963), 375--376.
\newblock \url{http://dx.doi.org/10.1090/S0002-9904-1963-10931-3}. --
\newblock DOI 10.1090/S0002--9904--1963--10931--3. --
\newblock ISSN 0002--9904

\bibitem[Moo99]{MooreCombinatorics}
\textsc{Moore}, J.~T.:
\newblock Some of the combinatorics related to {M}ichael's problem.
\newblock {In: }\emph{Proc. Amer. Math. Soc.} 127 (1999), Nr. 8, 2459--2467.
\newblock \url{http://dx.doi.org/10.1090/S0002-9939-99-04808-X}. --
\newblock DOI 10.1090/S0002--9939--99--04808--X. --
\newblock ISSN 0002--9939,1088--6826

\bibitem[Nyi20]{Nyikosultrafilter}
\textsc{Nyikos}, Peter:
\newblock Special ultrafilters and cofinal subsets of {$(^\omega\omega,<^*)$}.
\newblock {In: }\emph{Arch. Math. Logic} 59 (2020), Nr. 7-8, 1009--1026.
\newblock \url{http://dx.doi.org/10.1007/s00153-020-00724-9}. --
\newblock DOI 10.1007/s00153--020--00724--9. --
\newblock ISSN 0933--5846,1432--0665

\bibitem[She08]{Shelahgandb}
\textsc{Shelah}, Saharon:
\newblock Groupwise density cannot be much bigger than the unbounded number.
\newblock {In: }\emph{Mathematical Logic Quarterly} 54 (2008), Nr. 4, 340-344.
\newblock \url{http://dx.doi.org/https://doi.org/10.1002/malq.200710032}. --
\newblock DOI https://doi.org/10.1002/malq.200710032

\end{thebibliography}
\end{document}